\documentclass[12pt]{article}
\usepackage{amsmath}
\usepackage{amssymb}
\usepackage{amsfonts}
\usepackage{amsmath, dsfont}
\usepackage{amsthm}
\usepackage[margin=1in]{geometry}
\usepackage{graphicx}
\usepackage{graphics}
\usepackage{color}

\newtheorem{theorem}{Theorem}
\newtheorem{lemma}{Lemma}
\newtheorem{corollary}{Corollary}
\newtheorem{proposition}{Proposition}

\begin{document}

\title{Tightness for Maxima of Generalized Branching Random Walks}
\author{Ming Fang\thanks{School of Mathematics, University of Minnesota,
206 Church St. SE, Minneapolis, MN 55455, USA. The work was partially
supported by NSF grant DMS-0804133.}}
\maketitle
{\abstract
We study generalized branching random walks, which allow time dependence and local dependence between siblings. Under appropriate tail assumptions, we prove the tightness of $F_n(\cdot-Med(F_n))$, where $F_n(\cdot)$ is the maxima distribution at time $n$ and $Med(F_n)$ is the median of $F_n(\cdot)$. The main component in the argument is a proof of exponential decay of the right tail $1-F_n(\cdot-Med(F_n))$.
}

\section{Introduction}

We study the maxima of a class of generalized branching random walks (GBRW for short), which are governed by a family of branching rules $\{p_{n,k}\}_{n\geq 0,k\geq 1}$ and displacement laws $\{G_{n,k}\}_{n\geq 0,k\geq 1}$. Specifically, the $p_{n,k}$s are nonnegative reals such that $\sum_{k=1}^{\infty}p_{n,k}=1$ and $\sum_{k=1}^{\infty}kp_{n,k}< \infty$ for each $n\geq 0$, and the $G_{n,k}$s are distribution functions on $\mathds{R}^{k}$ for each $n$ and $k$. The GBRW is defined recursively as follows. At time 0, a particle $o=\overline{1}$ is at location 0. Suppose that $v=\overline{1\alpha_1\dots\alpha_n}$ ($\alpha_i\in\mathds{N}$) is a particle at location $S_v$ at time $n$. At time $n+1$, $v$ dies and gives birth to $K_v\geq 1$ (random) offspring. We denote the offspring of $v$ at generation $n+1$ by $\{\overline{v1},\dots,\overline{vK_v}\}$ and their locations by $\{S_v+X_{v,1},\dots,S_v+X_{v,K_v}\}$, respectively. Let $\mathds{D}$ be the collection of all the particles at any time and $\mathds{D}_n$ the ones alive at time $n$. We consider the case where the random vectors $\{(K_v,X_{v,1},\dots,X_{v,K_v})\}_{v\in\mathds{D}}$ indexed by particles are independent and have distributions
\begin{equation}\label{branching}
  P\left(K_v=k\big|v\in\mathds{D}_n, \mathcal{F}_n\right)=p_{n,k}
\end{equation}
and
\begin{eqnarray}\label{offspring displacement}
  &&P\left(X_{v,1}\leq x_1,\dots,X_{v,K_v}\leq x_{K_v}\big|v\in\mathds{D}_n,K_v=k,\mathcal{F}_n\right)=G_{n,k}(x_1,\dots,x_k) \nonumber\\
  &&\;\;\;\;\;\;\;\;\;\;\;\;\;\;\;\;\;\;\;\;\;\;\;\;\;\;\;\;\;\;\;\;\;\;\;\;\;\;\;
  \;\;\;\;\;\;\;
  \text{for}\;\; n=0,1,\dots \text{and}\;\; k=1,2,\dots,
\end{eqnarray}
where $\mathcal{F}_n=\sigma\{S_u|u\in\mathds{D}_k,k=0,1,\dots,n\}$ is the $\sigma$-field generated by the GBRW by time $n$.

We are concerned with the maximal displacement of particles at time $n$, i.e., $\mathcal{M}_n=\max_{v\in\mathds{D}_n}S_v$. Let $F_n(\cdot)$ be the distribution function of $\mathcal{M}_n$ and set $\bar{F}_n(\cdot)=1-F_n(\cdot)$. Under some assumptions, we want to prove the tightness of the sequence of re-centered distributions $F_n\left(\cdot-Med(F_n)\right)$, where $Med(F_n)$ is the median of $F_n$. See Section \ref{assumption n result} and Section \ref{assumption n result 2} for two different sets of assumptions under which tightness can be proved.

From the previous description, our GBRW allows time dependence (through the n parameter) and some local dependence (reflected by the joint distribution $G_{n,k}$). We will review some of the existing literature and make some comparison. Dekking and Host \cite{DH} (1991) gave a short proof for tightness of $F_n\left(\cdot-Med(F_n)\right)$ when the offspring displacements are uniformly bounded and possibly have some local dependence and time dependence; the technique strongly depends on one side boundedness. Addario-Berry and Reed \cite{AR} (2009) proved that $\mathcal{M}_n-E\mathcal{M}_n$ is exponentially tight when the offspring displacements are i.i.d. and satisfy some large deviation assumptions. By modifying the arguments in \cite{AR}, \cite{Br1} and \cite{DH}, it is possible to extend the tightness result to the case when the offspring displacements are unbounded and have local dependence but not time dependence. See \cite{BZ3} (2010) for using this method to prove the tightness of maxima of modified branching random walks derived from Gaussian free field. In a different direction, Bramson and Zeitouni \cite{BZ} (2009) provided an analytic method to prove tightness of the maximal displacement when the offspring displacements distributions depend on time and satisfy some tail conditions. \cite{BZ} assumed that the offspring displacements are i.i.d. and used a recursion to derive their results. When local dependence comes into play, the recursion, see \eqref{recursion} below, loses some of its nice properties. Therefore, the approach in \cite{BZ}, which is based on the introduction of an appropriate Lyapunov function, does not apply directly here, rather, it needs to be modified to take advantage of a recursion bound, see \eqref{bound} below.

In order to find a recursion, one needs to look at GBRWs starting from particles at some intermediate time. For any integer $m$ and $v=\overline{1\alpha_1\dots\alpha_m}\in \mathds{D}_m$, the process $\{S_u-S_v|u=\overline{1\alpha_1\dots\alpha_m\beta_1\dots\beta_k}\in\mathds{D}_{m+k},
\beta_k\in\mathds{N}, k=1,2,\dots\}$ is a GBRW governed by branching rules $\{p_{n+m,k}\}_{n\geq 0,k\geq 1}$ and displacement laws $\{G_{n+m,k}\}_{n\geq 0,k\geq 1}$. For $n>m$, the maximal displacement at time $n-m$ is denoted by $\mathcal{M}^v_n$. $\{\mathcal{M}^v_n\}_{v\in\mathds{D}_m}$ are i.i.d. random variables whose distribution is denoted by $F^m_n(\cdot)$. Again set $\bar{F}^m_n(\cdot)=1-F^m_n(\cdot)$. Note that $F_n(\cdot)=F^0_n(\cdot)$, $\bar{F}_n(\cdot)=\bar{F}^0_n(\cdot)$ and $\bar{F}_n^n(\cdot)=1_{\{x< 0\}}(\cdot)$.

One obtains a recursion regarding $F^m_n(\cdot)$ by looking at the first generation of GBRWs starting from particles at time $m$. For $n>m$,
\begin{equation*}
  F^m_n(x)=\sum_{k=1}^{\infty}p_{m,k}\int_{\mathds{R}^k}\prod_{i=1}^k F^{m+1}_n(x-y_i)d^kG_{m,k}(y_1,\dots,y_k).
\end{equation*}
Inspired by \cite{BZ}, we consider a recursion for the tail distribution $\bar{F}_n^m(\cdot)$. For $n>m$, the above equation is equivalent to
\begin{equation}\label{recursion}
  \bar{F}_n^m(x)=1-\sum_{k=1}^{\infty}p_{m,k}\int_{\mathds{R}^k}\prod_{i=1}^k \left(1-\bar{F}^{m+1}_n(x-y_i)\right)d^kG_{m,k}(y_1,\dots,y_k).
\end{equation}
Without loss of generality, for any $n,k>0$, we assume $G_{n,k}$ has the same marginal distributions, i.e.,
\begin{equation}\label{equal marginal}
  g_{n,k}(x)=\int_{\mathds{R}^{k-1}}d^{k-1}G_{n,k}(y_1,\dots,y_{i-1},x,y_{i+1},\dots,y_k)
\;\;\text{for any }1\leq i\leq k.
\end{equation}
Otherwise, one can replace $G_{n,k}$ by $\tilde{G}_{n,k}$ defined by
$$\tilde{G}_{n,k}(x_1,\dots,x_k)=\frac{1}{k!}\sum_{\pi\in\mathcal{P}_k}
G_{n,k}(x_{\pi(1)},\dots,x_{\pi(k)})$$
where $\mathcal{P}_k$ denotes all the permutations on $\{1,\dots,k\}$. Then $\tilde{G}_{n,k}$ has the same marginal distributions and one can easily check that recursion \eqref{recursion} is the same for $G_{n,k}$ and $\tilde{G}_{n,k}$.

To apply an approach similar to \cite{BZ}, we introduce two functions
\begin{equation}\label{Q_12}
Q_{1,k}(u)=1-(1-u)^k\;\;\text{and}\;\;Q_{2,k}(u)=ku\;\;\text{for}\;0\leq u\leq 1.
\end{equation}
We will work with the following recursion inequality derived from \eqref{recursion}, instead of \eqref{recursion} itself.
\begin{lemma}\label{lemma bound}
Assume $\bar{F}_n^m(x)$ satisfies the recursion \eqref{recursion}, then the following recursion bounds hold for $n>m$,
  \begin{equation}\label{bound}
  \sum_{k=1}^{\infty}p_{m,k}g_{m,k}*Q_{1,k}(\bar{F}_n^{m+1})(x)\leq \bar{F}_n^m(x)\leq \sum_{k=1}^{\infty}p_{m,k}g_{m,k}*Q_{2,k}(\bar{F}_n^{m+1})(x),
\end{equation}
where $*$ is the convolution defined by $f*g(x)=\int_{-\infty}^{\infty}f(x-y)dg(y)$ for any two functions $f(x)$ and $g(x)$ whenever the integral makes sense.
\end{lemma}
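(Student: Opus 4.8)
The plan is to extract the claimed inequalities from the recursion \eqref{recursion} by controlling the integrand $1-\prod_{i=1}^k(1-\bar F^{m+1}_n(x-y_i))$ pointwise, for each fixed $k$ and each fixed vector $(y_1,\dots,y_k)$. Write $u_i=\bar F^{m+1}_n(x-y_i)\in[0,1]$. Then the integrand equals $1-\prod_{i=1}^k(1-u_i)$, and I want to sandwich this between something that, after integrating against $d^kG_{m,k}$ and summing $p_{m,k}$, collapses to the two convolution expressions. The point is that $g_{m,k}$ is precisely the common one-dimensional marginal of $G_{m,k}$ (this is the normalization \eqref{equal marginal}), so $\int_{\mathds{R}^k} h(x-y_i)\,d^kG_{m,k}(y_1,\dots,y_k)=\int_{\mathds{R}} h(x-y)\,dg_{m,k}(y)=g_{m,k}*h(x)$ for any single coordinate $i$; and $g_{m,k}*Q_{2,k}(\bar F^{m+1}_n)(x)=k\,g_{m,k}*\bar F^{m+1}_n(x)=\sum_{i=1}^k\int h(x-y_i)\,d^kG_{m,k}$ with $h=\bar F^{m+1}_n$.

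So the two halves reduce to elementary inequalities for reals $u_1,\dots,u_k\in[0,1]$. For the upper bound: $1-\prod_{i=1}^k(1-u_i)\le\sum_{i=1}^k u_i$, which follows by a one-line induction on $k$ (or from the union bound, viewing $u_i$ as probabilities of independent events). Integrating this inequality against $d^kG_{m,k}$, using linearity and the marginal identity above, gives $\bar F^m_n(x)\le \sum_{k}p_{m,k}\sum_{i=1}^k g_{m,k}*\bar F^{m+1}_n(x)=\sum_k p_{m,k}\,g_{m,k}*Q_{2,k}(\bar F^{m+1}_n)(x)$, which is the right-hand inequality in \eqref{bound} (after noting $Q_{2,k}(u)=ku$ so $g_{m,k}*Q_{2,k}(\bar F^{m+1}_n)=k\,g_{m,k}*\bar F^{m+1}_n$, convolution being linear). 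For the lower bound I would use the reverse fact that $1-\prod_{i=1}^k(1-u_i)$ is increasing and concave-ish in each coordinate; the cleanest route is: since all $u_i\le \max_i u_i=:U$, we have $\prod_i(1-u_i)\le(1-U)^{\,1}$? No — that is the wrong direction. Instead I would use that each factor $1-u_i\le 1$, hence $\prod_{i=1}^k(1-u_i)\le 1-u_j$ for every single $j$, in particular for the $j$ achieving the maximum; but since $Q_{1,k}$ involves the $k$-th power, the correct elementary bound is $\prod_{i=1}^k(1-u_i)\le \prod_{i=1}^k(1-u_{(1)})$? That also fails. The right statement is simply: for each fixed $(y_1,\dots,y_k)$, pick the coordinate $i^\*$ minimizing $x-y_i$, equivalently maximizing $u_i$; then $1-\prod_i(1-u_i)\ge 1-(1-u_{i^\*})^k=Q_{1,k}(u_{i^\*})\ge$ ... hmm, $u_{i^\*}$ depends on the vector, so this does not directly integrate to a convolution with $g_{m,k}$.

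Given that subtlety, the honest plan for the lower bound is: by symmetrization we assumed $G_{m,k}$ is exchangeable, so $\int_{\mathds{R}^k}\big(1-\prod_{i=1}^k(1-u_i)\big)\,d^kG_{m,k}$ — here I use the pointwise inequality $1-\prod_{i=1}^k(1-u_i)\ge 1-(1-\min_i u_i)^k$ is false in general, so instead I use convexity. Concretely, for fixed $x$ the map $y\mapsto \bar F^{m+1}_n(x-y)$ composed appropriately: I claim $\int_{\mathds{R}^k}\prod_{i=1}^k(1-u_i)\,d^kG_{m,k}\le \Big(\int_{\mathds{R}}(1-\bar F^{m+1}_n(x-y))^{?}\,\dots\Big)$ — this is where Jensen/FKG-type reasoning enters. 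The clean fact I will use: because $G_{m,k}$ is exchangeable and $t\mapsto\log(1-t)$ is concave, $\prod_{i=1}^k(1-u_i)=\exp\big(\sum_i\log(1-u_i)\big)$, and by the AM–GM / exchangeability one gets $\int \prod_i(1-u_i)\,d^kG_{m,k}\le \int\big(1-u_1\big)^k$? No. I expect the \textbf{main obstacle} to be precisely this lower-bound direction: the upper bound is a trivial union-bound estimate, but the lower bound $\sum_k p_{m,k}\,g_{m,k}*Q_{1,k}(\bar F^{m+1}_n)(x)\le\bar F^m_n(x)$ requires relating $\int_{\mathds{R}^k}\prod_i(1-u_i)\,d^kG_{m,k}$ to $\big(g_{m,k}*(1-\bar F^{m+1}_n)\big)^{?}$; the natural tool is the FKG/positive-association inequality or Jensen applied to the convex function $z\mapsto (1-\bar F^{m+1}_n(x))$-type composition, exploiting that a convolution (i.e. integrating one marginal at a time) and then taking the $k$-th power dominates integrating the full product, because $(1-\bar F^{m+1}_n)$ is monotone and $G_{m,k}$ is exchangeable. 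I would state this as a short lemma: for exchangeable $G$ on $\mathds{R}^k$ with marginal $g$ and any bounded measurable decreasing $\phi:\mathds{R}\to[0,1]$, $\int_{\mathds{R}^k}\prod_{i=1}^k\phi(x-y_i)\,d^kG(y)\ge \big(\int_{\mathds{R}}\phi(x-y)\,dg(y)\big)^k$ — wait, I need $\le$ to land on the stated $Q_{1,k}$ lower bound: since $1-\prod\ge 1-\big(\int\phi\,dg\big)^k=Q_{1,k}(g*\bar F^{m+1}_n)$ requires $\prod_i\phi(x-y_i)$ after integration to be $\le(g*\phi(x))^k$, i.e. the \emph{negative} correlation / "variance" direction, which is exactly the content of the concavity of $\prod$ along exchangeable measures via Jensen: $\int\prod_i\phi(\cdot-y_i)d^kG\le\prod_i\int\phi(\cdot-y_i)\,dg=(g*\phi)^k$ holds when $\phi\ge0$ by the generalized Hölder/Jensen inequality for the product functional under exchangeability. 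Establishing this clean inequality — and checking the convolution identities carefully so that $g_{m,k}*Q_{j,k}(\bar F^{m+1}_n)$ really matches the integrated expressions — is the crux; once it is in hand, summing over $k$ against $p_{m,k}$ (and invoking dominated convergence, which is legitimate since each summand is bounded by $p_{m,k}$ in the $Q_1$ case and by $k\,p_{m,k}$, summable by assumption, in the $Q_2$ case) finishes \eqref{bound}.
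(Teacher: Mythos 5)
Your upper bound is exactly the paper's argument (the union-type inequality $1-\prod_i(1-u_i)\le\sum_i u_i$ plus the equal-marginal identity), and it is fine. The gap is in the lower bound: the ``clean fact'' you propose, namely that for exchangeable $G$ with marginal $g$ and nonnegative $\phi$ one has $\int_{\mathds{R}^k}\prod_{i=1}^k\phi(x-y_i)\,d^kG\le\bigl(\int_{\mathds{R}}\phi(x-y)\,dg(y)\bigr)^k$, is false in general. Exchangeability gives no negative association: take $G$ to be the law of $(Y,Y,\dots,Y)$ with $Y\sim g$; then the left side equals $\int\phi(x-y)^k\,dg(y)$, which by Jensen (convexity of $t\mapsto t^k$) is $\ge\bigl(\int\phi(x-y)\,dg(y)\bigr)^k$, typically strictly. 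So the key lemma your plan rests on cannot be established, and FKG-type reasoning in fact points in the opposite direction for positively correlated siblings.

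The fix is that you were aiming at too strong a target. Your claimed inequality would yield the bound $\bar F^m_n(x)\ge\sum_k p_{m,k}\,Q_{1,k}\bigl(g_{m,k}*\bar F^{m+1}_n\bigr)(x)$, i.e.\ $Q_{1,k}$ applied \emph{after} the convolution; but the lemma only asserts $g_{m,k}*Q_{1,k}(\bar F^{m+1}_n)(x)$, with $Q_{1,k}$ applied pointwise \emph{inside} the integral, and by concavity of $Q_{1,k}$ this is the weaker statement. The weaker statement follows from the generalized H\"older inequality, which is what the paper uses: with $f_i(y_1,\dots,y_k)=1-\bar F^{m+1}_n(x-y_i)$,
\begin{equation*}
\int_{\mathds{R}^k}\prod_{i=1}^k f_i\,d^kG_{m,k}\;\le\;\prod_{i=1}^k\Bigl(\int_{\mathds{R}^k}f_i^{\,k}\,d^kG_{m,k}\Bigr)^{1/k}
=\int_{\mathds{R}}\bigl(1-\bar F^{m+1}_n(x-y)\bigr)^k\,dg_{m,k}(y),
\end{equation*}
where the last equality uses the equal-marginal normalization \eqref{equal marginal}. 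Note the $k$-th power lands \emph{inside} the single-marginal integral (this holds for arbitrary dependence, no association hypothesis needed), and then
\begin{equation*}
\bar F^m_n(x)\;\ge\;\sum_{k}p_{m,k}\Bigl(1-\int_{\mathds{R}}\bigl(1-\bar F^{m+1}_n(x-y)\bigr)^k dg_{m,k}(y)\Bigr)
=\sum_k p_{m,k}\,g_{m,k}*Q_{1,k}(\bar F^{m+1}_n)(x),
\end{equation*}
which is exactly the stated lower bound. In short: replace your (false) $\bigl(\int\phi\,dg\bigr)^k$ bound by the correct H\"older bound $\int\phi^k\,dg$; that is both true and precisely what the lemma's form of $Q_{1,k}$ requires.
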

\begin{proof}
\quad We begin by proving the upper bound in \eqref{bound}. Rewrite \eqref{recursion} as
$$\bar{F}_n^m(x)=\sum_{k=1}^{\infty}p_{m,k}\int_{\mathds{R}^k}
  \left(1-\prod_{i=1}^{k}\left(1-\bar{F}_n^{m+1}(x-y_i)\right)\right)d^kG_{m,k}(y_1,\dots,y_k).$$
Using the inequality that $1-\prod_{i=1}^{k}(1-x_i)\leq \sum_{i=1}^kx_i$ for $0\leq x_i\leq 1$ and the fact that $G_{m,k}(\cdot,\dots,\cdot)$ has the same marginal distributions $g_{m,k}(\cdot)$, one obtains that the above quantity is at most
$$\sum_{k=1}^{\infty}p_{m,k}\int_{\mathds{R}^k}\sum_{i=1}^{k}\bar{F}_n^{m+1}(x-y_i)
  d^kG_{m,k}(y_1,\dots,y_k)= \sum_{k=1}^{\infty}p_{m,k}\int_{\mathds{R}}k\bar{F}_n^{m+1}(x-y)dg_{m,k}(y).$$
Together with the definition of $Q_{2,k}$, c.f. \eqref{Q_12}, one obtains the upper bound in \eqref{bound}.

We next prove the lower bound in \eqref{bound}. Applying H\"{o}lder's inequality to \eqref{recursion}, one obtains that
$$\bar{F}_n^m(x)\geq 1-\sum_{k=1}^{\infty}p_{m,k}\prod_{i=1}^{k}\left(\int_{\mathds{R}^k}
  \left(1-\bar{F}_n^{m+1}(x-y_i)\right)^kd^kG_{m,k}(y_1,\dots,y_k)\right)^{1/k}.$$
Again, since $G_{m,k}(\cdot,\dots,\cdot)$ possesses the same marginal distributions $g_{m,k}(\cdot)$, the right side above equals
\begin{eqnarray*}
&&1-\sum_{k=1}^{\infty}p_{m,k}\prod_{i=1}^{k}\left(\int_{\mathds{R}}
  \left(1-\bar{F}_n^{m+1}(x-y)\right)^kdg_{m,k}(y)\right)^{1/k} \\
&=& 1-\sum_{k=1}^{\infty}p_{m,k}\left(\int_{\mathds{R}}
  \left(1-\bar{F}_n^{m+1}(x-y)\right)^kdg_{m,k}(y)\right)\\
&=& \sum_{k=1}^{\infty}p_{m,k}\left(\int_{\mathds{R}}\left(1-
  \left(1-\bar{F}_n^{m+1}(x-y)\right)^k\right)dg_{m,k}(y)\right).
\end{eqnarray*}
Together with the definition of $Q_{1,k}$, see \eqref{Q_12}, one obtains the lower bound in \eqref{bound}.
\end{proof}

\section{Assumptions and statement of result for bounded branching.}\label{assumption n result}
In this section, we discuss the tightness property in the case where the offspring number is uniformly bounded. To state our result, we need some assumptions both on the branching and displacement laws. We introduce assumptions concerning the branching mechanism.
\begin{itemize}
  \item[(B1)] $\{p_{n,k}\}_{n\geq 0}$ possess a uniformly bounded support, i.e., there exists an integer $k_0>1$ such that $p_{n,k}=0$ for all $n$ and $k\notin\{1,\dots,k_0\}$.
  \item[(B2)] The mean offspring number is uniformly greater than 1 by some fixed constant. I.e., there exists a real number $m_0>1$ such that $\inf_{n}\{\sum_{k=1}^{k_0}kp_{n,k}\}>m_0$.
\end{itemize}
We introduce the following assumptions on the displacement laws $G_{n,k}$ for those $n$s and $k$s such that $p_{n,k}\neq 0$.
\begin{itemize}
  \item[(MT1)] For some fixed $\epsilon_0< \frac{1}{4}\log m_0\wedge 1$, there exists an $x_0$ such that $\bar{g}_{n,k}(x_0)\geq 1-\epsilon_0$ for all $n$ and $k$, where $\bar{g}_{n,k}(x)=1-g_{n,k}(x)$. By shifting, we may and will assume that $x_0=0$, that is, $\bar{g}_{n,k}(0)\geq 1-\epsilon_0$.
  \item[(MT2)] There exist $a>0$ and $M_0>0$ such that $\bar{g}_{n,k}(x+M)\leq e^{-aM}\bar{g}_{n,k}(x)$ for all $n,k$ and $M>M_0$, $x\geq 0$.
  \item[(GT)] For any $\eta_1>0$, there exists a $B>0$ such that $G_{n,k}(B,\dots,B)\geq 1-\eta_1$ and $G_{n,k}([-B,\infty)^k)$ $\geq 1-\eta_1$ for all $n$ and $k$. (With an abuse of notation, $G_{n,k}$ is also used here as a function on measurable sets defined by $G_{n,k}(A):=\int_Ad^kG_{n,k}(x_1,\dots,x_k)$ for $A\subset \mathds{R}^k$. See \eqref{offspring displacement} for the definition of $G_{n,k}$ as a distribution function on $\mathds{R}^k$.)
\end{itemize}
Assumptions (MT1) and (MT2) concern the marginal distributions. (MT1) prevents too much mass drifting to $-\infty$, while (MT2) guarantees that the right tails of the marginals decay at least exponentially. (GT) concerns the joint distribution of the movements and prevents any step from being too negative or too positive to dominate the walk. Now we are ready to state our main theorem.
\begin{theorem}\label{tightness}
  Under the above assumptions (B1), (B2), (MT1), (MT2) and (GT), the family of the recentered maxima distributions $\{F_n\left(\cdot-Med(F_n)\right)\}_{n\geq 0}$ is tight.
\end{theorem}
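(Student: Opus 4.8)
The plan is to reduce the tightness of $\{F_n(\cdot-Med(F_n))\}_{n\geq 0}$ to two uniform-in-$n$ estimates: an exponential bound $\bar F_n(x+Med(F_n))\leq Ce^{-\gamma x}$ on the recentered right tail, and $F_n(-x+Med(F_n))\to 0$ as $x\to\infty$ on the recentered left tail. Fix $\gamma\in(0,a)$. Iterating (MT2) (stepping by a fixed amount exceeding $M_0$) gives a uniform bound $\bar g_{n,k}(x)\leq Ce^{-ax}$ for $x\geq 0$; integration by parts then bounds the marginal moment generating functions $\Lambda_{n,k}(\gamma):=\int_{\mathds{R}}e^{\gamma y}\,dg_{n,k}(y)$ uniformly, say by $\Lambda(\gamma)<\infty$, while (MT1) gives $\Lambda_{n,k}(\gamma)\geq\bar g_{n,k}(0)\geq 1-\epsilon_0$. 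A natural first attempt at the right tail is to iterate the linear upper bound in \eqref{bound}: if $h_{m+1}(x):=\min\{1,\,e^{-\gamma(x-c_{m+1})}\}$ dominates $\bar F_n^{m+1}$, then $g_{m,k}*h_{m+1}(x)\leq\Lambda_{m,k}(\gamma)e^{-\gamma(x-c_{m+1})}$, so \eqref{bound} gives $\bar F_n^m(x)\leq e^{-\gamma(x-c_m)}$ as soon as $c_m-c_{m+1}=\tfrac1\gamma\log\big(\sum_{k=1}^{k_0}p_{m,k}\,k\,\Lambda_{m,k}(\gamma)\big)$; downward induction from $m=n$, where $\bar F_n^n=\mathds{1}_{(-\infty,0)}\leq\min\{1,e^{-\gamma x}\}$, yields $\bar F_n^0(x)\leq e^{-\gamma(x-c_0^{(n)})}$ with $c_0^{(n)}=\sum_{j=0}^{n-1}\tfrac1\gamma\log\big(\sum_kp_{j,k}k\Lambda_{j,k}(\gamma)\big)$ deterministic and, by (B1),(B2),(MT1), sandwiched between two linear functions of $n$. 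This already shows $Med(F_n)\leq c_0^{(n)}+\tfrac1\gamma\log 2$. However, $Med(F_n)$ in general lies \emph{below} $c_0^{(n)}$ by an amount that \emph{grows} with $n$ (of order $\log n$ in the homogeneous case), so recentering this bound by $Med(F_n)$ produces a constant diverging with $n$; removing this defect --- obtaining the recentered exponential right tail with constants uniform in $n$ --- is the main component of the proof, and is exactly where \eqref{bound} must be coupled with a Lyapunov-function argument adapted from \cite{BZ}.

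For the sharp, uniform statement I would work directly with $\bar F_n^m$ recentered by its own median $\mu_n^m:=Med(F_n^m)$. One ingredient is a lemma, using (B1),(B2),(MT1),(MT2),(GT), that the increments $\mu_n^m-\mu_n^{m+1}$ are bounded uniformly in $0\leq m<n$: the upper bound follows from the iteration above, while the lower bound uses supercriticality (B2) together with (MT1): the hypothesis $\epsilon_0<\tfrac14\log m_0\wedge 1$ ensures the rightward branching drift strictly beats the leftward pull of the $\epsilon_0$-fraction of nonpositive marginal displacements, and (GT) is used to discard the $O(\eta_1)$-fraction of atypically negative displacement vectors. (This lower bound can alternatively be obtained by a Paley--Zygmund second-moment estimate for $\sum_{v\in\mathds{D}_m}e^{\gamma S_v}$ at an intermediate generation, with (GT) controlling the sibling correlations.) The other ingredient is a Lyapunov functional $\Psi$ on tail profiles, as in \cite{BZ}, whose finiteness (given that $\bar F_n^0$ is monotone) is equivalent to the recentered maximum having an exponentially decaying right tail and a left tail tending to $0$, uniformly in $n$. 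Using the two-sided bound \eqref{bound} one shows $\Psi\big(\bar F_n^m(\cdot+\mu_n^m)\big)$ is not increased as $m$ decreases, up to bounded corrections absorbed using the uniform bound on $\mu_n^m-\mu_n^{m+1}$: the upper bound in \eqref{bound}, through the linear $Q_{2,k}(u)=ku$ with $k\leq k_0$ by (B1), keeps the right part of the profile under exponential control, while the lower bound, through $Q_{1,k}(u)=1-(1-u)^k\sim ku$ for small $u$, amplifies the profile and keeps its left part close to $1$ in step with the drift. Since $\bar F_n^n=\mathds{1}_{(-\infty,0)}$ has finite $\Psi$, downward induction gives $\sup_n\Psi\big(\bar F_n^0(\cdot+Med(F_n))\big)<\infty$, which yields both recentered tail estimates and hence the asserted tightness.

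I expect the main obstacle to be this propagation of a \emph{uniform} bound on $\Psi$ across the $\sim n$ applications of the recursion. Because of the local dependence carried by the joint laws $G_{n,k}$, the exact recursion \eqref{recursion} lacks the monotonicity and convexity structure exploited in \cite{BZ} (the union-bound and H\"{o}lder inequalities behind \eqref{bound} are lossy), so the argument must be run on the inequality \eqref{bound} rather than on \eqref{recursion}; the danger is then that a small error incurred at each of the $n$ steps accumulates. Overcoming this requires showing $\Psi$ is genuinely non-increasing --- not merely almost non-increasing --- under the recursion bound once the profile is recentered, which forces a careful choice of the weight defining $\Psi$ and a quantitatively tight use of all of (B1),(B2),(MT1),(MT2),(GT); in particular the threshold $\epsilon_0<\tfrac14\log m_0\wedge 1$ and the uniform exponential tail rate $a$ are exactly what make the one-step estimate hold simultaneously for every $m$.
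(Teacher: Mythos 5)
Your overall plan---reduce tightness to a uniform exponential bound on the recentered right tail plus a left-tail estimate, and obtain the former by running a Lyapunov-type argument on the recursion bound \eqref{bound} because naive iteration of the linear upper bound loses a centering constant that diverges with $n$---is the same strategy as the paper's, and your diagnosis in the first paragraph is accurate. But the load-bearing step is missing. You never define the functional $\Psi$, and the claim that it can be made ``genuinely non-increasing'' under one application of \eqref{bound} is precisely the hard content of the paper: there it is Proposition \ref{prop_fact}, proved through the flatness/truncation analysis of Lemmas \ref{truncation_flat}, \ref{conv_flat} and \ref{flatness} under the explicit parameter constraints \eqref{M1}--\eqref{b4}, with the $\frac14\log m_0$ slack in \eqref{x_1} absorbing the bounded corrections. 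Moreover, your decision to recenter each $\bar F_n^m$ by its own median $\mu_n^m$ creates an extra obstacle the paper deliberately avoids: the Lyapunov function \eqref{Lfunction} takes a supremum over all $x$ with $u(x)\le\frac12$, so it is effectively translation invariant, Proposition \ref{Lyapunov} is proved with no recentering, and no control of $\mu_n^m-\mu_n^{m+1}$ is ever needed (the induction is the contradiction propagation \eqref{fact}, terminated at $m=n$ where $L(\bar F_n^n)=-\infty$). Your median-increment lemma is itself nontrivial and only sketched: for a single offspring, (GT) and the definition of the median give only $P\bigl(X+\mathcal{M}^{(m+1)}\ge \mu_n^{m+1}-B\bigr)\ge (1-\eta_1)/2$, which just misses $\tfrac12$, so you would have to quantify the probability of multiple offspring via (B1)--(B2) and handle the sibling dependence in $G_{m,k}$; none of this is done.

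The second gap is the left tail. You assert that finiteness of $\Psi$ is ``equivalent to'' both the exponential right tail and a vanishing left tail of the recentered maximum. That is not how the Lyapunov function works in \cite{BZ} or here: Proposition \ref{Lyapunov} uses only (B1), (B2), (MT1), (MT2) and yields only the right-tail statement of Corollary \ref{RT}. The left tail is a separate argument in which (GT) enters through the pointwise sandwich \eqref{pwbounds} of Lemma \ref{Assumption in BZ}, the uniform properties (T1'), (T2') of $Q_m$ (which is where (B1) is used again), and the transfer Lemma \ref{LnR}, culminating in Proposition \ref{tightprop}. Your proposal uses (GT) only inside the median-increment lemma and offers no mechanism by which the left tail would be controlled; a single functional whose finiteness gave both tail estimates would have to be constructed and its propagation under \eqref{bound} proved, and nothing in the sketch indicates how. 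So the proposal is a reasonable outline of ``adapt \cite{BZ} to the inequality \eqref{bound},'' but both of its decisive steps---the one-step estimate for the (recentered) Lyapunov functional and the right-to-left tail transfer---are absent.
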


Theorem \ref{tightness} is proved in Section \ref{sec_proof_theorem}, with the proofs of some propositions deferred to Section \ref{sec_proof_proposition}. With an analysis of a Lyapunov function, we control the right tails of distributions $F_n\left(\cdot-Med(F_n)\right)$. Then we use assumption (GT) together with the right tail property to control the behavior of left tails of the distributions. Using a similar approach, we can also prove a variation of Theorem \ref{tightness} under slightly different assumptions in Section \ref{assumption n result 2}.

\section{A Lyapunov function, main induction and proof of Theorem \ref{tightness}}\label{sec_proof_theorem}

This section follows \cite{BZ}, with some minor revisions, in introducing a Lyapunov function. Namely, for a choice of $\epsilon_1$, $b$ and $M$ (to be determined later), we define the Lyapunov function $L(\cdot)$ as
\begin{equation}\label{Lfunction}
  L(u)=\sup_{\{x:u(x)\in (0,\frac{1}{2}]\}} l(u;x),
\end{equation}
where
\begin{equation}\label{lfunction}
  l(u;x)=\log\left(\frac{1}{u(x)}\right)+\log_b\left(1+\epsilon_1-\frac{u(x-M)}{u(x)}\right)_+\;.
\end{equation}
Here $(x)_+=x\vee 0$, and we take the convention that $\log 0=-\infty$.

As in \cite{BZ}, the heart of the proof is contained in the following proposition.
\begin{proposition}\label{Lyapunov}
  Under assumptions (B1), (B2), (MT1) and (MT2), there is a choice of $\epsilon_1$, $b$ and $M$ such that $\sup_{m\leq n}L(\bar{F}_n^m)<C$ for some finite number $C>0$.
\end{proposition}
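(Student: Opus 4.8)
The plan is to show that the Lyapunov function $L$ acts as a contraction-type potential along the recursion, so that $L(\bar F_n^m)$ cannot grow as $m$ decreases from $n$ to $0$, and in fact stays below a fixed constant $C$ depending only on the structural parameters. The argument proceeds by downward induction on $m$ (from $m=n$, where $\bar F_n^n = \mathbf 1_{\{x<0\}}$ has a trivially bounded — in fact we should check, finite — Lyapunov value, down to $m=0$). The key object to estimate is how $L$ transforms under one step of the recursion bound \eqref{bound}: given that $u:=\bar F_n^{m+1}$ satisfies $L(u)\le C$, I want to produce the conclusion $L(v)\le C$ for $v:=\bar F_n^m$, where $v$ is squeezed between $\sum_k p_{m,k}\, g_{m,k}*Q_{1,k}(u)$ and $\sum_k p_{m,k}\, g_{m,k}*Q_{2,k}(u)$.

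First I would record the elementary properties of $L$ and $l$: monotonicity of $u\mapsto l(u;x)$ in the relevant regime, the fact that $L(u)<\infty$ forces $u$ to have exponentially decaying right tail with a rate readable off from $b$, $M$ and $C$ (this is the "right tail" payoff advertised after the theorem), and the behaviour of $L$ under the two operations that appear in \eqref{bound}, namely (i) convolution with a marginal $g_{m,k}$ satisfying (MT1) and (MT2), and (ii) the pointwise maps $Q_{1,k}$ and $Q_{2,k}$. For (ii) the point is that near $u=0$ both $Q_{1,k}(u)$ and $Q_{2,k}(u)$ are $\approx ku$, so applying them multiplies $u$ by roughly $k$, which \emph{decreases} $\log(1/u)$ by $\log k$; summing against $p_{m,k}$ with mean $>m_0$ and using (B1)–(B2) this produces a gain of order $\log m_0$ in the first term of $l$. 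The role of (MT1) with its small $\epsilon_0 < \tfrac14\log m_0$ is exactly to ensure the convolution in (i) does not spend more than a fraction of this gain by shifting mass leftward, while (MT2) controls the tail term $\log_b(1+\epsilon_1 - u(x-M)/u(x))_+$ so that the extra $\log_b$ correction in \eqref{lfunction} does not blow up — this is why the $M$-shifted comparison is built into $l$ in the first place. I would follow \cite{BZ} closely here, checking that each of their Lyapunov-function estimates survives the replacement of the exact recursion by the one-sided bounds \eqref{bound}; the two bounds have to be used in tandem, the lower bound (via $Q_{1,k}$) to control $v$ from below where we need $v$ not too small, and the upper bound (via $Q_{2,k}$) to control $v$ from above.

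Concretely, the induction step is: fix the target level $x$ with $v(x)\in(0,\tfrac12]$; use the upper bound in \eqref{bound} to write $v(x)\le \sum_k p_{m,k}\int k\,u(x-y)\,dg_{m,k}(y)$, split the $y$-integral at a threshold $t_0$ chosen using (MT1)/(MT2), bound the $y\le 0$-ish part using $u(x-y)\le u(x_*)$ at a controlled point together with the hypothesis $L(u)\le C$ (which gives $u(x-y)\le e^{-L(u)}$-type control and the $M$-shift ratio), and bound the far-left tail $y\ll 0$ using the smallness $\epsilon_0$ of $\bar g_{m,k}(0)$; symmetrically handle $u(x-M)/u(x)$ in the $\log_b$ term using the lower bound in \eqref{bound} for $v(x-M)$ or $v(x)$ as needed. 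Assembling, one gets $l(v;x)\le \big(\text{something}\le C\big)$ provided $\epsilon_1$, $b$, $M$ are chosen in the right order: first $M$ large enough (depending on $a,M_0,m_0$ from (MT2) and (B2)), then $b$, then $\epsilon_1$ small, and finally $C$ large enough to absorb the finitely many additive constants and to dominate $L(\mathbf 1_{\{x<0\}})$. Taking the supremum over $x$ closes the step, and downward induction gives $\sup_{m\le n}L(\bar F_n^m)<C$.

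The main obstacle I anticipate is the interplay between the two inequalities in \eqref{bound} inside a single $l(\cdot;x)$: unlike in \cite{BZ}, where the exact recursion lets one propagate a single functional identity, here the upper and lower bounds involve \emph{different} nonlinearities ($Q_{2,k}$ vs.\ $Q_{1,k}$), and the definition of $l$ mixes values of $v$ at $x$ and at $x-M$, so one cannot uniformly use "the" recursion for both. The delicate point is to verify that the gain of order $\log m_0$ coming from the mean-offspring condition still strictly dominates the combined losses from (a) the convolution shift controlled only by $\epsilon_0$, (b) the discrepancy between $Q_{1,k}$ and $Q_{2,k}$ away from $u=0$ (which is genuine when $u$ is not tiny, precisely the regime $u(x)$ close to $\tfrac12$), and (c) the $\log_b$ correction term — all simultaneously, with a \emph{single} choice of parameters valid for all $m\le n$ and all $n$. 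I expect this to require the constraint $\epsilon_0<\tfrac14\log m_0\wedge 1$ from (MT1) to be used essentially, perhaps with a four-way split of the total gain $\log m_0$ into the four error sources, mirroring the "$\tfrac14$" in the hypothesis.
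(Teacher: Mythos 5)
Your overall skeleton is the contrapositive of the paper's and is fine as a reduction: the paper fixes the explicit constant $C=\log 2$, proves the one-step implication (Proposition \ref{prop_fact}) that $L(\bar F_n^m)>C$ forces $L(\bar F_n^{m+1})>C$ (indeed with a gain of at least $\tfrac14\log m_0$), and gets a contradiction at $m=n$ where $L(\bar F_n^n)=-\infty$; your downward induction ``$L(u)\le C\Rightarrow L(v)\le C$'' is the same statement. Your identification of the $\log m_0$ gain from linearizing $Q_{1,k},Q_{2,k}$ near $0$ together with (B2) is also the right mechanism (it is exactly how \eqref{u(x_2)2} is obtained, with the sum over $k$ taken \emph{inside} before any logarithm -- note that an ``average of $\log k$'' reading would fail by Jensen).

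The genuine gap is in the inductive step itself, and it is precisely the point you flag as ``the main obstacle'' without resolving it. First, your proposed direct assembly rests on the assertion that $L(u)\le C$ gives ``$u(x-y)\le e^{-L(u)}$-type control''; it does not. $L(u)\le C$ gives no pointwise bound on $u$: it only couples smallness with steepness (wherever $u$ is small, $u(\cdot-M)/u(\cdot)$ must be close to $1+\epsilon_1$), so splitting the convolution at a threshold $t_0$ and inserting a pointwise bound cannot close the estimate for the first term of $l(v;x)$, let alone the $\log_b$ term. Second, the heart of the matter is the \emph{flatness transfer}: from $v$ small and flat at $x_1$ (ratio $1+\epsilon<1+\epsilon_1$ over the $M$-shift) one must produce a point $x_1-y_1$ where $u$ is simultaneously small and flat with a flatness loss of order only $\delta=\kappa(\epsilon_1-\epsilon)$, so that the $\log_b$ correction in \eqref{lfunction} costs $O(\kappa/\log b)$ against the gain $\log m_0$. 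This is where the paper spends nearly all of its effort: the truncation Lemma \ref{truncation_flat} (using (MT2) and $y_0=\tfrac1a\log\tfrac{2k_0}{\delta f_0}$ to discard far-away mass), the auxiliary quantities $q$ and $r$ in \eqref{q}--\eqref{r} that control how fast $u$ can grow to the left of $x_1$ -- which is what allows the quadratic error in $Q_{1,k}(u)\ge ku-c_1u^2$ to be absorbed and the two different nonlinearities $Q_{1,k}$, $Q_{2,k}$ to be compared in the ratio bound \eqref{contradiction} -- then Lemma \ref{conv_flat} to pass from the integrated inequality to a single marginal $g_k$, and Lemma \ref{flatness} to extract a pointwise flat location (cases (a) and (b), with the $e^{ay/8}$ slack feeding into the constraints \eqref{ep_1nkappa}--\eqref{b4}). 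Your plan contains no substitute for this machinery: in particular nothing rules out the scenario that $u$ is steep at every relevant scale to the left of $x_1$, which is exactly what the $q,r$ bookkeeping and the $(4k_0)$-factors are designed to exclude. Without that, the step from ``$v$ small and flat'' to ``$u$ small and flat somewhere'' -- i.e.\ the whole of Proposition \ref{prop_fact} -- remains unproved.
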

The proof of Proposition \ref{Lyapunov} will take the bulk of the paper, and is detailed in Section \ref{sec_proof_proposition}. Before proving it, we discuss its consequences. As in \cite[Corollary 2.8]{BZ}, the same proof, using Proposition \ref{Lyapunov}, yields the following
\begin{corollary}\label{RT}
  Let the assumptions (B1), (B2), (MT1) and (MT2) hold. Then, there exists $\delta_1$ such that, for all $n$ and $m\leq n$,
  \begin{equation}\label{RTinequality}
    \bar{F}_n^m(x)\leq \delta_1\;\; \text{implies}\;\; \bar{F}_n^m(x-M)\geq (1+\frac{\epsilon_1}{2})\bar{F}_n^m(x).
  \end{equation}
\end{corollary}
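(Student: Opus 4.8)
The plan is to derive Corollary~\ref{RT} as an essentially formal consequence of the uniform bound $\sup_{m\leq n} L(\bar F_n^m) < C$ from Proposition~\ref{Lyapunov}, exactly as in \cite[Corollary 2.8]{BZ}. The key observation is that the two terms in $l(u;x)$, cf.~\eqref{lfunction}, pull in opposite directions: if $u(x)$ is small then $\log(1/u(x))$ is large, but if in addition $u(x-M)/u(x)$ is not appreciably larger than $1$ (say $u(x-M)/u(x) \le 1 + \epsilon_1/2$), then the second term $\log_b(1+\epsilon_1 - u(x-M)/u(x))_+$ is bounded below by $\log_b(\epsilon_1/2)$, a fixed constant. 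Hence on the set $\{x : u(x)\in(0,\tfrac12]\}$, a point where the ratio fails to grow forces $l(u;x)$ to be at least $\log(1/u(x)) + \log_b(\epsilon_1/2)$, and this must not exceed $C$.

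Concretely, I would argue as follows. Fix $u = \bar F_n^m$ and suppose $u(x) \le \delta_1$ for a threshold $\delta_1 \le \tfrac12$ to be chosen. Then $u(x) \in (0,\tfrac12]$ (the case $u(x)=0$ being handled separately or vacuously, since then monotonicity of $\bar F_n^m$ gives $u(x-M) \ge u(x) = 0$ and the conclusion is an equality of zeros — one should note $\bar F_n^m$ is nonincreasing so $u(x-M)\ge u(x)$ always). Assume for contradiction that $u(x-M) < (1+\tfrac{\epsilon_1}{2}) u(x)$. Then $1+\epsilon_1 - u(x-M)/u(x) > \epsilon_1/2 > 0$, so the positive part is active and
$$
l(u;x) \;\ge\; \log\frac{1}{u(x)} + \log_b\frac{\epsilon_1}{2} \;\ge\; \log\frac{1}{\delta_1} + \log_b\frac{\epsilon_1}{2}.
$$
Since $x$ lies in the supremum-defining set, $L(u) \ge l(u;x)$, so $\log(1/\delta_1) + \log_b(\epsilon_1/2) \le C$. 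Choosing $\delta_1$ small enough that $\log(1/\delta_1) > C - \log_b(\epsilon_1/2)$ yields a contradiction; hence $u(x-M) \ge (1+\tfrac{\epsilon_1}{2})u(x)$, which is \eqref{RTinequality}. The choice of $\delta_1$ depends only on $C$, $b$, $\epsilon_1$, and is uniform in $n$ and $m\le n$ because the bound on $L(\bar F_n^m)$ is.

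I do not expect any serious obstacle here; the content is entirely in Proposition~\ref{Lyapunov}. The only points requiring a little care are bookkeeping ones: first, ensuring the logarithm base $b$ is chosen with $b>1$ so that $\log_b$ is increasing (otherwise the inequality directions flip) — this is part of the ``choice of $\epsilon_1$, $b$ and $M$'' already fixed in Proposition~\ref{Lyapunov}; second, handling the degenerate cases $u(x)=0$ and $u(x)$ on the boundary, which follow from monotonicity of $\bar F_n^m$ and the convention $\log 0 = -\infty$; and third, making sure $\delta_1 \le 1/2$ so that $u(x)\le\delta_1$ indeed places $x$ in the set over which $L$ is a supremum. None of these affect the uniformity of the resulting $\delta_1$.
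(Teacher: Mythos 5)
Your proposal is correct and is exactly the argument the paper has in mind: the paper derives Corollary~\ref{RT} verbatim from Proposition~\ref{Lyapunov} as in \cite[Corollary 2.8]{BZ}, namely that a point with $\bar F_n^m(x)\leq\delta_1$ small but ratio $\bar F_n^m(x-M)/\bar F_n^m(x)<1+\epsilon_1/2$ would force $l(\bar F_n^m;x)\geq\log(1/\delta_1)+\log_b(\epsilon_1/2)\geq C$, contradicting the uniform Lyapunov bound. Your bookkeeping points (taking $\delta_1\leq 1/2$, $b>1$, and the trivial case $\bar F_n^m(x)=0$) are handled appropriately, and the uniformity in $n$ and $m$ follows as you say.
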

This corollary gives a desired control over the behavior of the right tail of $\bar{F}_n^m(\cdot)$. We next control the left tail. First, one obtains the following pointwise bounds for the integral \eqref{recursion}.
\begin{lemma}\label{Assumption in BZ}
  The assumption (GT) implies that, for any $\eta_1>0$, there exists a $B$ such that  \begin{equation}\label{pwbounds}
    Q_m(\bar{F}_n^{m+1})(x+B)-\eta_1\leq \bar{F}_n^m(x)\leq Q_m(\bar{F}_n^{m+1})(x-B)+\eta_1,
  \end{equation}
  where $Q_m(u)=\sum_{k=1}^{\infty}p_{m,k}\left(1-(1-u)^k\right)$.
\end{lemma}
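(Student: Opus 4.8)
The plan is to argue directly from the exact recursion \eqref{recursion}, written in the form
$$\bar{F}_n^m(x)=\sum_{k=1}^{\infty}p_{m,k}\int_{\mathds{R}^k}\Big(1-\prod_{i=1}^{k}\big(1-\bar{F}_n^{m+1}(x-y_i)\big)\Big)\,d^kG_{m,k}(y_1,\dots,y_k),$$
and to use only two facts: that $\bar{F}_n^{m+1}(\cdot)$ is non-increasing (it is a tail distribution function, equal to $1_{\{x<0\}}$ when $m+1=n$), and that (GT) lets one confine the displacement vector to the box $[-B,B]^k$ up to probability $\eta_1$. Since $0\le 1-\prod_i(1-\bar{F}_n^{m+1}(x-y_i))\le 1$ and $0\le 1-(1-u)^k\le 1$, every series that appears is dominated by $\sum_k p_{m,k}=1$, so convergence is never an issue and $Q_m(\bar{F}_n^{m+1})(x\pm B)$ is well defined.

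For the upper bound, fix $\eta_1>0$ and take $B>0$ as in (GT). Split $\mathds{R}^k$ into $(-\infty,B]^k$ and its complement. On the complement the integrand is at most $1$ and, by (GT), $G_{m,k}\big(((-\infty,B]^k)^c\big)=1-G_{m,k}(B,\dots,B)\le\eta_1$, so this part contributes at most $\eta_1$ after summing against the $p_{m,k}$. On $(-\infty,B]^k$ each coordinate satisfies $y_i\le B$, hence $x-y_i\ge x-B$, so monotonicity gives $\bar{F}_n^{m+1}(x-y_i)\le\bar{F}_n^{m+1}(x-B)$, whence $\prod_i\big(1-\bar{F}_n^{m+1}(x-y_i)\big)\ge\big(1-\bar{F}_n^{m+1}(x-B)\big)^k$ and the integrand is at most $1-\big(1-\bar{F}_n^{m+1}(x-B)\big)^k$. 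Integrating this constant bound over $(-\infty,B]^k$ (against a sub-probability measure) and summing over $k$ yields exactly $Q_m(\bar{F}_n^{m+1})(x-B)$, so $\bar{F}_n^m(x)\le Q_m(\bar{F}_n^{m+1})(x-B)+\eta_1$.

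For the lower bound, discard everything outside $[-B,\infty)^k$; this is legitimate because the integrand is nonnegative. On $[-B,\infty)^k$ one has $y_i\ge -B$, hence $x-y_i\le x+B$ and $\bar{F}_n^{m+1}(x-y_i)\ge\bar{F}_n^{m+1}(x+B)$, so the integrand is at least $1-\big(1-\bar{F}_n^{m+1}(x+B)\big)^k$. Using $G_{m,k}([-B,\infty)^k)\ge 1-\eta_1$ from (GT) together with $0\le 1-(1-u)^k\le 1$, one gets for each $k$
$$p_{m,k}\,\big(1-(1-\bar{F}_n^{m+1}(x+B))^k\big)\,G_{m,k}\big([-B,\infty)^k\big)\ \ge\ p_{m,k}\,\big(1-(1-\bar{F}_n^{m+1}(x+B))^k\big)-p_{m,k}\,\eta_1,$$
and summing over $k$ produces $Q_m(\bar{F}_n^{m+1})(x+B)-\eta_1$, i.e.\ $\bar{F}_n^m(x)\ge Q_m(\bar{F}_n^{m+1})(x+B)-\eta_1$.

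There is no serious obstacle here; the whole content is the localization afforded by (GT) plus the monotonicity and boundedness of the tails. The only point demanding care is bookkeeping: keeping every monotonicity step pointed in the correct direction (so that the shift lands at $x-B$ in the upper bound and at $x+B$ in the lower bound), and checking that a single $B$ from (GT) simultaneously controls both the "some coordinate too large" event and the "some coordinate too negative" event, which is exactly how (GT) is phrased.
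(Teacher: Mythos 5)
Your proof is correct and follows essentially the same route as the paper: restrict the recursion integral to $(-\infty,B]^k$ (resp.\ $[-B,\infty)^k$) using (GT), apply monotonicity of $\bar{F}_n^{m+1}$ to replace each $y_i$ by the worst-case shift $\pm B$, and absorb the discarded mass into the $\eta_1$ term. The only difference is cosmetic bookkeeping (you split the domain and bound the complement's contribution directly, while the paper keeps the factor $G_{m,k}(B,\dots,B)\geq 1-\eta_1$ inside and redistributes it afterwards), which changes nothing of substance.
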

\begin{proof}
\quad For any $\eta_1>0$, choose the $B$ as in the assumption (GT). The upper bound is obtained by only considering the integral over $(-\infty,B]^k$ in \eqref{recursion}.
$$\bar{F}_n^m(x)\leq 1-\sum_{k=1}^{\infty}p_{m,k}\int_{(-\infty,B]^k}\prod_{i=1}^k \left(1-\bar{F}^{m+1}_n(x-y_i)\right)d^kG_{m,k}(y_1,\dots,y_k).$$
By the monotonicity of $\bar{F}_n^m(\cdot)$, the right side is less than
$$1-\sum_{k=1}^{\infty}p_{m,k}\left(1-\bar{F}_n^{m+1}(x-B)\right)^kG_{m,k}(B,\cdots,B).$$
For any $\eta_1$, choose $B$ as in assumption (GT). Then $G_{m,k}(B,\cdots,B)\geq 1-\eta_1$, and the above quantity is less than or equal to
\begin{eqnarray*}
  &&1-\sum_{k=1}^{\infty}p_{m,k}\left(1-\bar{F}_n^{m+1}(x-B)\right)^k(1-\eta_1)\\
  &=& Q_m(\bar{F}_n^{m+1})(x-B)+\eta_1 \sum_{k=1}^{\infty}p_{m,k}\left(1-\bar{F}_n^{m+1}(x-B)\right)^k\\
  &\leq & Q_m(\bar{F}_n^{m+1})(x-B)+\eta_1,
\end{eqnarray*}
proving the upper bound in \eqref{pwbounds}. To obtain the lower bound, first rewrite \eqref{recursion} as
$$\bar{F}_n^m(x)=\sum_{k=1}^{\infty}p_{m,k}\int_{\mathds{R}^k}\left(1-\prod_{i=1}^k \left(1-\bar{F}^{m+1}_n(x-y_i)\right)\right)d^kG_{m,k}(y_1,\dots,y_k).$$
By restricting the above integral to $[-B,\infty)^k$, one has a lower bound on $\bar{F}_n^m$,
$$\bar{F}_n^m(x)\geq \sum_{k=1}^{\infty}p_{m,k}\int_{[-B,\infty)^k}\left(1-\prod_{i=1}^k \left(1-\bar{F}^{m+1}_n(x-y_i)\right)\right)d^kG_{m,k}(y_1,\dots,y_k).$$
Since $\bar{F}_n^{m+1}(x)$ is decreasing in $x$ and $G_{m,k}\left([-B,\infty)^k\right)\geq 1-\eta_1$ as in assumption (GT), one has
\begin{eqnarray*}
  \bar{F}_n^m(x)&\geq & \sum_{k=1}^{\infty}p_{m,k}\left(1-\left(1-\bar{F}_n^{m+1}(x+B)\right)^k\right)
  G_{m,k}\left([-B,\infty)^k\right)\\
  &\geq & \sum_{k=1}^{\infty}p_{m,k}\left(1-\left(1-\bar{F}_n^{m+1}(x+B)\right)^k\right)
  (1-\eta_1)\\
  &=& Q_m(\bar{F}_n^{m+1})(x+B)-\eta_1
  \sum_{k=1}^{\infty}p_{m,k}\left(1-\left(1-\bar{F}_n^{m+1}(x+B)\right)^k\right)\\
  &\geq & Q_m(\bar{F}_n^{m+1})(x+B)-\eta_1,
\end{eqnarray*}
proving the lower bound in \eqref{pwbounds} and completing the proof of Lemma \ref{Assumption in BZ}.
\end{proof}

Lemma \ref{Assumption in BZ} almost checks \cite[ASSUMPTION 2.4]{BZ}, except that $Q_m$ depends on $m$. However, with the assumption (B1), $Q_m$ satisfies \cite[T1 and T2 in DEFINITION 2.3]{BZ} uniformly in $m$. Namely, the family of strictly increasing functions $Q_m:[0,1]\to[0,1]$, with $Q_m(0)=0$ and $Q_m(1)=1$, satisfies the following.
\begin{itemize}
  \item[(T1')] $Q_m(x)>x$ for all $x\in(0,1)$. For any $\delta>0$, one can choose $c_{\delta}=1+\frac{m_0-1}{k_0}\delta>1$ such that $Q_m(x)>c_{\delta}x$ for all $x\leq 1-\delta$ and all $m$.
  \item[(T2')] For each $\delta\in (0,1)$, there exists a nonnegative function $g_{\delta}(\epsilon)\to 0$ as $\epsilon\to 0$ (for example, choose $g_{\delta}(\epsilon)=\frac{(1-(1-\delta)^{k_0})}{k_0\delta}
      \left(\frac{1+\epsilon}{\delta+\epsilon}\right)^{k_0-1}\epsilon$) such that, for any $m$, if $x\geq \delta$ and $Q_m\left(\left(1+g_{\delta}(\epsilon)\right)x\right)\leq \frac{1-\delta}{1+\epsilon}$, then $Q_m\left(\left(1+g_{\delta}(\epsilon)\right)x\right)\geq (1+\epsilon)Q_m(x)$.
\end{itemize}
To check the above two properties, one uses the strict convexity of $1-(1-x)^k$ and its monotonicity in $k$. Details are omitted here. From the above (T1') and (T2'), one can deduce the following lemma in exactly the same way as in \cite[Lemma 2.10]{BZ}.
\begin{lemma}\label{LnR}
 Suppose that \eqref{RTinequality} holds for all $m\leq n$ under some choice of $\delta_1,M,\epsilon_1>0$. Also, suppose that assumption (B1) and \eqref{pwbounds} hold. For fixed $\eta_0\in(0,1)$, there exist a constant $\gamma=\gamma(\eta_0)<1$ and a continuous function $f(t)=f_{\eta_0}(t):[0,1]\to [0,1]$, with $f(t)\to_{t\to 0}0$, such that for any $\epsilon\in(0,\frac{1-\eta_0}{\eta_0})$, $\eta\in[\delta_1,\eta_0]$ and large enough $N_1=N_1(\epsilon)$, the following holds. If $M'>M$ and, for any $m<n$, $\bar{F}_n^m(x)\geq \delta_1$,
 $$\bar{F}_n^m(x-M')\leq (1+\epsilon)\bar{F}_n^m(x)\;\;\mbox{and}\;\; \bar{F}_n^m(x-M')\leq \eta,$$
 then
 $$\bar{F}_n^{m+1}(x+N_1-M')\leq \left(1+f(\epsilon)\right)\bar{F}_n^{m+1}(x-N_1)$$
 and
 $$\bar{F}_n^{m+1}(x+N_1-M')\leq \gamma\eta.$$
\end{lemma}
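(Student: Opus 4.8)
The plan is to carry over the proof of \cite[Lemma 2.10]{BZ} line for line in structure, replacing their Assumption 2.4 by the pointwise recursion bound \eqref{pwbounds} and their T1, T2 by the uniform-in-$m$ properties (T1') and (T2'); the standing hypothesis \eqref{RTinequality} enters in the auxiliary role played there by the right-tail estimate. Since Lemma \ref{Assumption in BZ} furnishes \eqref{pwbounds} for \emph{every} prescribed $\eta_1>0$ (at the price of a correspondingly large $B=B(\eta_1)$), and $B$ is absent from the conclusion, I would fix the parameters in this order: given $\eta_0$, choose a small $\delta\in(0,1)$ depending only on $\delta_1,k_0,\eta_0$ and let $g_\delta$ be the function of (T2'); then, for a given $\epsilon$, choose $\eta_1=\eta_1(\epsilon)>0$ small, take $B=B(\eta_1)$, and finally take $N_1=N_1(\epsilon)$ much larger than $B$. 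Writing $v=\bar F_n^{m+1}$, $x_R=x+N_1-M'$, $x_L=x-N_1$: when $M<M'\le 2N_1$ one has $x_R\ge x_L$, hence $v(x_R)\le v(x_L)$ and the flatness conclusion is automatic, so one reduces to $M'>2N_1$, in which case $x_R-B$ and $x_L+B$ both lie in $(x-M',x)$ with $x_R-B<x_L+B$. Because $\bar F_n^m$ is nonincreasing and $\bar F_n^m(x)\ge\delta_1$, every value of $\bar F_n^m$ at the points one touches lies in the fixed compact set $[\delta_1,\bar F_n^m(x-M')]\subseteq[\delta_1,\eta_0]\subset(0,1)$; throughout I would use that $Q_m$ is strictly increasing, concave, and $Q_m(0)=0$, so $Q_m^{-1}$ is increasing and convex, and that $\{Q_m\}_m$, $\{Q_m^{-1}\}_m$ obey uniform-in-$m$ estimates — precisely what (B1)--(B2), hence (T1') and (T2'), guarantee.

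For the bound $v(x_R)\le\gamma\eta$: the lower half of \eqref{pwbounds} at $x_R-B$ gives $Q_m(v(x_R))\le\bar F_n^m(x_R-B)+\eta_1\le\bar F_n^m(x-M')+\eta_1\le\eta+\eta_1$ by monotonicity, so $v(x_R)\le Q_m^{-1}(\eta+\eta_1)$; since $\sup_m\sup_{u\in[\delta_1,\eta_0+\eta_1]}Q_m^{-1}(u)/u<1$ (because $Q_m(u)>u$ strictly and $\eta_0+\eta_1<1$, with uniformity in $m$ from (B1)), this is $\le\gamma\eta$ for a suitable fixed $\gamma=\gamma(\eta_0)<1$ once $\eta_1$ is small; this holds for all admissible $M'$. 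For the flatness bound I would apply the two halves of \eqref{pwbounds} at $x_R-B$ and $x_L+B$: with $w:=\bar F_n^m(x-M')\in[\delta_1,\eta_0]$ one gets $v(x_R)\le Q_m^{-1}(w+\eta_1)$, and since $\bar F_n^m(x_L+B)\ge\bar F_n^m(x)\ge w/(1+\epsilon)$ (the flatness hypothesis), also $v(x_L)\ge Q_m^{-1}\!\big(\tfrac{w}{1+\epsilon}-\eta_1\big)$. The arguments $w+\eta_1$ and $\tfrac{w}{1+\epsilon}-\eta_1$ lie in the fixed compact subinterval $[\delta_1\eta_0-\eta_1,\ \eta_0+\eta_1]$ of $(0,1)$, on which $\{Q_m^{-1}\}_m$ is uniformly continuous and bounded away from $0$, so
\[
\frac{v(x_R)}{v(x_L)}\ \le\ \frac{Q_m^{-1}(w+\eta_1)}{Q_m^{-1}\!\big(\tfrac{w}{1+\epsilon}-\eta_1\big)}\ \le\ 1+f(\epsilon),
\]
with $f$ continuous and $f(\epsilon)\to0$ as $\epsilon\to0$. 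This last inequality is exactly the quantitative modulus-of-continuity statement packaged in (T2'): in that language one argues by contradiction, assuming $v(x_R)>(1+g_\delta(\epsilon'))v(x_L)$ where $\epsilon'$ is the slight enlargement of $\epsilon$ caused by the $\eta_1$-errors, checks the hypotheses $v(x_L)\ge\delta$ and $Q_m\!\big((1+g_\delta(\epsilon'))v(x_L)\big)\le\tfrac{1-\delta}{1+\epsilon'}$, derives $v(x_R)\le(1+g_\delta(\epsilon'))v(x_L)$ — a contradiction — and sets $f(\epsilon)=g_\delta(\epsilon')$; monotonicity of $Q_m^{-1}$ between $v(x_L)$ and $v(x_R)$ and $\eqref{RTinequality}$ are used here in the same places as in \cite{BZ}.

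The step I expect to be the main obstacle is making all of this genuinely uniform — in $m$, in the admissible $M'$, and across the whole ranges $\epsilon\in(0,\tfrac{1-\eta_0}{\eta_0})$ and $\eta\in[\delta_1,\eta_0]$ — and in particular verifying the hypotheses of (T2') right up to the endpoint $\epsilon=\tfrac{1-\eta_0}{\eta_0}$, where one can only hope for $f(\epsilon)\le1$ and must fall back on the $\gamma\eta$-bound together with the lower bound on $v(x_L)$ rather than on (T2') in its cleanest form. The point that rescues this is precisely that the hypotheses $\bar F_n^m(x)\ge\delta_1$ and $\bar F_n^m(x-M')\le\eta_0$ confine every relevant function value to a fixed compact subinterval of $(0,1)$ on which $Q_m^{-1}$ is controlled uniformly in $m$ by (B1). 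The remaining work — the elementary verification of (T1') and (T2') from the strict convexity of $1-(1-u)^k$ and its monotonicity in $k$, and the routine bookkeeping of the $\eta_1$-errors and the $B$-shifts (the reason $\eta_1$ is taken tiny and $N_1\gg B$) — then goes through exactly as in \cite[Lemma 2.10]{BZ}.
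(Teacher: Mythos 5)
Your proposal is correct and follows essentially the same route as the paper, which itself gives no independent argument but simply invokes \cite[Lemma 2.10]{BZ} after noting that Lemma \ref{Assumption in BZ} supplies \eqref{pwbounds} in place of \cite[Assumption 2.4]{BZ} and that (T1'), (T2') hold uniformly in $m$ thanks to (B1)--(B2); your explicit bookkeeping with $Q_m^{-1}$ (the uniform contraction $Q_m(x)>c_\delta x$ for the $\gamma\eta$ bound, and the (T2')-type modulus estimate on the fixed compact subinterval of $(0,1)$ for the flatness bound, with $\eta_1=\eta_1(\epsilon)$ small and $N_1\gg B$) is exactly the transplantation the paper intends.
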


By iterating, the above lemma gives a connection between the left and right tail behavior. That is, by applying Corollary \ref{RT} and Lemma \ref{LnR} several times as in \cite[Proof of Proposition 2.9]{BZ}, the same contrapositive argument proves: for fixed $\eta_0\in(0,1)$, there exist an $\hat{\epsilon}_0=\hat{\epsilon}_0(\eta_0)>0$, an $n_0$ and an $\hat{M}$ such that, if $n>n_0$ and $\bar{F}_n^0(x-\hat{M})\leq \eta_0$, then $\bar{F}_n^0(x-\hat{M})\geq (1+\hat{\epsilon}_0)\bar{F}_n^0(x)$. This will yield the following tightness proposition by recalling that $F_n(\cdot)=F_n^0(\cdot)$.
\begin{proposition}\label{tightprop}
   Suppose that \eqref{RTinequality} holds for all $m\leq n$ under some choice of $\delta_1,M,\epsilon_1>0$. Also, suppose that assumption (B1) and \eqref{pwbounds} hold. Then, the family of recentered maxima distributions $\{F_n\left(\cdot-Med(F_n)\right)\}_{n\geq 0}$ is tight.
\end{proposition}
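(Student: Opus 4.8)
The plan is to establish two uniform tail bounds for $F_n(\cdot-Med(F_n))$, one on each side. Two reductions are used throughout. First, $F_n=F_n^0$ and, for each fixed $m$, the family $\{F_n^m\}_{n\ge m}$ consists of the maxima distributions of the GBRW governed by the time‑shifted laws $\{p_{j+m,k}\}_{j,k},\{G_{j+m,k}\}_{j,k}$, which satisfies exactly the same \emph{uniform} hypotheses (B1), \eqref{RTinequality} and \eqref{pwbounds}; hence any conclusion proved for $F_n$ is valid for $F_n^m$ with $n$ replaced by $n-m$. Second, for any fixed integer $N$ there are only finitely many distributions $F_n$ with $n\le N$, and each is trivially tight, so it suffices to prove both tail bounds for all large $n$. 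I write $Med(F)=\inf\{x:1-F(x)\le\frac12\}$, so $\bar F_n^m(Med(F_n^m))\le\frac12$ and $\bar F_n^m(y)>\frac12$ for $y<Med(F_n^m)$.

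\emph{Right tail.} By Lemma \ref{LnR}, whose hypotheses are in force, and by iterating it together with Corollary \ref{RT} exactly as in the paragraph preceding the statement, one gets: for $\eta_0=\frac12$ there exist $\hat\epsilon_0>0$, $n_0$ and $\hat M>0$ such that, whenever $n-m>n_0$,
\[
  \bar F_n^m(y)\le\tfrac12\quad\Longrightarrow\quad \bar F_n^m(y+\hat M)\le(1+\hat\epsilon_0)^{-1}\bar F_n^m(y).
\]
Starting from $y=Med(F_n)$ and applying this repeatedly gives $\bar F_n\big(Med(F_n)+j\hat M\big)\le\frac12(1+\hat\epsilon_0)^{-j}$ for all $j\ge0$ and all $n>n_0$; hence for each $\epsilon>0$ there is $K^+_\epsilon$ with $\bar F_n(Med(F_n)+K^+_\epsilon)<\epsilon$ for all $n$.

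\emph{Median increments.} Fix $\eta_1=\frac14$ in \eqref{pwbounds}, with corresponding constant $B_0$. Using $Q_m(u)=\sum_k p_{m,k}(1-(1-u)^k)\le(\sum_k kp_{m,k})u\le k_0u$ (by (B1) and $1-(1-u)^k\le ku$), the upper bound in \eqref{pwbounds} shows $\bar F_n^{m+1}(x-B_0)\le\frac1{4k_0}$ forces $\bar F_n^m(x)\le\frac12$, i.e. $Med(F_n^m)\le x$; hence $Med(F_n^m)\le\inf\{x:\bar F_n^{m+1}(x)\le\frac1{4k_0}\}+B_0$. Applying the right‑tail bound above to $F_n^{m+1}$ (legitimate when $n-(m+1)>n_0$) bounds that infimum by $Med(F_n^{m+1})+j_1\hat M$ for a fixed $j_1$. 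Thus $Med(F_n^m)-Med(F_n^{m+1})\le C_0:=B_0+j_1\hat M$ whenever $n-(m+1)>n_0$.

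\emph{Left tail and conclusion.} Fix $\epsilon>0$ and set $\phi=\inf_m Q_m$. Property (T1') (taking $\delta=1-u$) gives $\phi(u)-u\ge\frac{m_0-1}{k_0}u(1-u)$ for all $u\in(0,1)$, so for $u\in[\frac12,1)$ the map $T(u)=\phi(u)-\eta_1$ is nondecreasing and $1-T(u)\le\lambda(1-u)+\eta_1$ with $\lambda:=1-\frac{m_0-1}{2k_0}\in(0,1)$. Choosing $\eta_1=\eta_1(\epsilon)>0$ small enough (and letting $B=B(\eta_1)$ be the constant from \eqref{pwbounds}), the iterates of $T$ started at $\frac12$ stay in $[\frac12,1)$, increase, and reach a value $\ge1-\epsilon$ after a number $j=j(\epsilon)$ of steps depending only on $\epsilon,m_0,k_0$. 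Iterating the lower bound in \eqref{pwbounds} $j$ times, using $Q_m\ge\phi$ and the monotonicity of $\phi$, yields, for $n>j$, $\bar F_n^0(x)\ge T^{(j)}\big(\bar F_n^j(x+jB)\big)$ where $T^{(j)}$ is the $j$‑fold iterate. If $x+jB<Med(F_n^j)$ then $\bar F_n^j(x+jB)>\frac12$, whence $\bar F_n^0(x)\ge T^{(j)}(\frac12)\ge1-\epsilon$. Taking $x=Med(F_n^j)-jB-1$ and using $Med(F_n^j)\ge Med(F_n^0)-jC_0$ from the previous step (valid once $n>n_0+j(\epsilon)$), we get $\bar F_n\big(Med(F_n)-K^-_\epsilon\big)\ge1-\epsilon$ with $K^-_\epsilon:=j(\epsilon)(C_0+B)+1$, for all large $n$. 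Combined with the right‑tail bound and the finitely many small $n$, this yields the asserted tightness. The step I expect to be the crux is the median‑increment bound $Med(F_n^m)-Med(F_n^{m+1})\le C_0$: this is where the time dependence must be confronted, and it goes through only because the right‑tail estimate, though stated for $F_n^0$, transfers verbatim to the intermediate levels $F_n^m$ via the self‑similarity of the GBRW, together with the crude but uniform bound $Q_m(u)\le k_0u$; checking that the iteration count $j(\epsilon)$ can be chosen independently of $n$ from the uniform estimate (T1') alone also needs a little care but is otherwise routine.
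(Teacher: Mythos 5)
Your proposal is correct, but the left-tail half takes a genuinely different route from the paper. The paper's proof is essentially an appeal to the contrapositive iteration of Corollary \ref{RT} and Lemma \ref{LnR} as in \cite[Proof of Proposition 2.9]{BZ}, carried out for an \emph{arbitrary} $\eta_0\in(0,1)$: the resulting estimate (if $\bar{F}_n^0(x-\hat M)\le\eta_0$ then $\bar{F}_n^0(x-\hat M)\ge(1+\hat\epsilon_0)\bar{F}_n^0(x)$) gives the right tail by iterating from the median and the left tail by taking $\eta_0$ close to $1$ and running the geometric decay into a contradiction with $\bar{F}_n\approx\frac12$ at the median. You invoke that same iterated estimate, but only at $\eta_0=\frac12$, and then replace the $\eta_0\uparrow 1$ step by a Dekking--Host-flavoured argument: a per-generation median bound $Med(F_n^m)\le Med(F_n^{m+1})+C_0$, obtained from the upper bound in \eqref{pwbounds} with $Q_m(u)\le k_0u$ together with the right-tail decay at level $m+1$ (legitimate, since Corollary \ref{RT} and Lemma \ref{LnR} are stated uniformly in $m$ -- equivalently, via the time-shift self-similarity you cite), combined with a supercritical catch-up iteration of the lower bound in \eqref{pwbounds} using (T1'). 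Both of these steps check out; the crux you identify (uniformity in $m$ of the right-tail estimate, and an iteration count $j(\epsilon)$ depending only on $\epsilon,m_0,k_0,\eta_1$) is handled correctly. The trade-off: the paper's route is shorter once the full-strength $\eta_0$-version of the \cite{BZ} iteration is granted, while yours needs only its weakest ($\eta_0=\frac12$) form and otherwise uses elementary consequences of \eqref{pwbounds} and (T1'), at the cost of the extra median-increment bookkeeping; it also makes explicit the deduction of tightness that the paper leaves implicit. Two minor remarks: your contraction constant uses $m_0$, i.e.\ (B2), which -- exactly as in the paper's derivation of (T1') before Lemma \ref{LnR} -- is implicitly part of the standing hypotheses here; and the iterates of $T$ need not actually increase near $1$, but only the quantitative bound $1-T(u)\le\lambda(1-u)+\eta_1$ is used, so that parenthetical claim is harmless.
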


By now, we have proved Theorem \ref{tightness} assuming that Proposition \ref{Lyapunov} is true. Thus, remained to show is Proposition \ref{Lyapunov}, which we do in the next section.

\section{Analysis of Lyapunov function and proof of Proposition \ref{Lyapunov}}\label{sec_proof_proposition}

In this section we focus on proving Proposition \ref{Lyapunov}, which is an analog of \cite[Theorem 2.7]{BZ}. The same idea works here: the exponential decay of $g_{n,k}$ will not bring much mass from faraway during the recursion. However, the exact approach does not quite apply here. \cite{BZ} deals with the nonlinearity and convolution in a recursion equality separately. In our case, the recursion \eqref{recursion} does not possess such a nice form. Fortunately, we have the recursion inequalities \eqref{bound}. These bounds require one to analyze the nonlinearity and convolution together. Throughout this section, all the sums about $k$ are from $1$ to $k_0$ since assumption (B1) is assumed. We begin with some properties of the two functions in \eqref{Q_12}. $Q_{2,k}(u)=ku$ is simple, and the following simple facts about $Q_{1,k}(u)$ will be used later on.
\begin{lemma}\label{Q_1_LB12}
  There exists a $c_1=c_1(k_0)\geq 1$ such that, for all $1\leq k\leq k_0$ and $0\leq u\leq 1$,
  \begin{equation}\label{Q_1_LB1}
    Q_{1,k}(u)\geq u
  \end{equation}
  and
  \begin{equation}\label{Q_1_LB2}
    ku-c_1u^2\leq Q_{1,k}(u)\leq ku=Q_{2,k}(u).
  \end{equation}
\end{lemma}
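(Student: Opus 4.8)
The goal is to establish two elementary inequalities about $Q_{1,k}(u) = 1-(1-u)^k$ for $1 \le k \le k_0$ and $0 \le u \le 1$: that $Q_{1,k}(u) \ge u$, and the sandwich $ku - c_1 u^2 \le Q_{1,k}(u) \le ku$ for a suitable constant $c_1 = c_1(k_0)$.

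\begin{proof}
For the lower bound \eqref{Q_1_LB1}, since $k \ge 1$ we have $(1-u)^k \le (1-u)^1 = 1-u$ for $u \in [0,1]$, hence $Q_{1,k}(u) = 1 - (1-u)^k \ge 1 - (1-u) = u$.

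For \eqref{Q_1_LB2}, the upper bound $Q_{1,k}(u) \le ku$ follows from Bernoulli's inequality $(1-u)^k \ge 1 - ku$ (valid for $u \in [0,1]$, $k \ge 1$), which gives $Q_{1,k}(u) = 1 - (1-u)^k \le ku = Q_{2,k}(u)$. For the lower bound, expand by the binomial theorem:
$$(1-u)^k = \sum_{j=0}^k \binom{k}{j}(-u)^j = 1 - ku + \sum_{j=2}^k \binom{k}{j}(-u)^j.$$
Thus $Q_{1,k}(u) = ku - \sum_{j=2}^k \binom{k}{j}(-u)^j$, and it suffices to bound $\left|\sum_{j=2}^k \binom{k}{j}(-u)^j\right| \le c_1 u^2$. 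Since $0 \le u \le 1$, each term satisfies $\binom{k}{j} u^j \le \binom{k}{j} u^2$ for $j \ge 2$, so
$$\left|\sum_{j=2}^k \binom{k}{j}(-u)^j\right| \le \sum_{j=2}^k \binom{k}{j} u^j \le u^2 \sum_{j=2}^{k} \binom{k}{j} \le u^2 \, 2^{k} \le u^2 \, 2^{k_0}.$$
Taking $c_1 = 2^{k_0} \ge 1$ yields $ku - c_1 u^2 \le Q_{1,k}(u)$, completing the proof.
\end{proof}

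Since every step here is a direct application of Bernoulli's inequality or a crude binomial bound, there is no real obstacle; the only point requiring a moment's care is choosing $c_1$ uniformly in $k \le k_0$, which is why the bound $\sum_{j=2}^k \binom{k}{j} \le 2^k \le 2^{k_0}$ is used rather than a $k$-dependent constant.
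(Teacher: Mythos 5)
Your proof is correct, and since the paper states this lemma without proof (as a collection of "simple facts"), your elementary verification via $(1-u)^k\leq 1-u$, Bernoulli's inequality, and a crude binomial-tail bound with $c_1=2^{k_0}\geq 1$ is exactly the kind of argument the paper leaves implicit. Nothing is missing.
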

Next, we state a choice of $\epsilon_1$, $b$ and $M$ in the Lyapunov function under which Proposition \ref{Lyapunov} is true. Throughout, we fix $k_0,m_0,\epsilon_0, M_0$ and $a$ as in assumptions (B1), (B2), (MT1) and (MT2). Next, we choose $\epsilon_1<\frac{1}{100}$ small, $b>1$ close to 1, $M>100$ big and an auxiliary variable $\kappa<\frac{1}{100}$ small (used later to control the flatness change) such that the following restrictions hold.
\begin{equation}\label{M1}
  M>4M_0\;\; \mbox{and}\;\; e^{-aM/2}\leq (4k_0)^4e^{-aM/2}\leq \frac{1}{100};
\end{equation}
\begin{equation}\label{b1}
\frac{8(2k_0)^{5/2}\epsilon_1^{1/2\log b-3/2}}{(1-\epsilon_0)\kappa^{3/2}}<\frac{1}{2c_1};
\end{equation}
\begin{equation}\label{b2}
  c_1\frac{1+\epsilon_1}{1-\epsilon_0}\epsilon_1^{1/\log b}\leq \sum_{k=1}^{k_0}kp_{n,k}-m_0\;\;\mbox{for all } n;
\end{equation}
\begin{equation}\label{ep_1nkappa}
  \frac{\log m_0}{2}\geq 2(\epsilon_1+\epsilon_0)+\frac{6\kappa}{\log b};
\end{equation}
\begin{equation}\label{M2}
  \frac{aM}{16\log b}\geq 2\left(\epsilon_1+\epsilon_0+\log(4k_0)\right)-\frac{\log\kappa}{\log b};
\end{equation}
\begin{equation}\label{b4}
  \frac{a}{16\log b}\geq \frac{2\log(4k_0)}{M}.
\end{equation}
The above conditions are compatible. In fact, thinking of $\kappa$ as $\beta \log b$, one can choose $\epsilon_1$ and $\beta$ small enough so that \eqref{ep_1nkappa} holds due to the choice of $\epsilon_0$ in assumption (MT1), then one chooses a $b$ close enough to 1 so that \eqref{b1}, \eqref{b2} and \eqref{b4} hold due to the choice of $m_0$ as in assumption (B2), and finally one chooses $M$ large enough so that \eqref{M1} and \eqref{M2} hold.

With the choice of the above $\epsilon_1$, $b$, $M$ and $\kappa$, we can now prove Proposition \ref{Lyapunov}.
\begin{proof}[Proof of Proposition \ref{Lyapunov}]
\quad Choose $C=-\log\frac{1}{2}$. The conclusion $\sup_{m\leq n}L(\bar{F}_n^m)\leq C$ will follow from the claim below
\begin{equation}\label{fact}
L(\bar{F}_n^m)> C\;\; \mbox{implies that}\;\; L(\bar{F}_n^{m+1})>C\;\; \mbox{for any}\;\; m<n.
\end{equation}
Suppose the conclusion is violated, then $L(\bar{F}_n^m)>C$ for some $m\leq n$. Iterating the claim $n-m$ times, one gets $L(\bar{F}_n^n)>C$. However, $L(\bar{F}_n^n)=-\infty$ because $\bar{F}_n^n(x)=1_{\{x< 0\}}(x)$. This contradiction proves proposition \ref{Lyapunov}, assuming claim \eqref{fact}.
\end{proof}

The claim \eqref{fact} follows from the following proposition because of \eqref{bound}.
\begin{proposition}\label{prop_fact}
Suppose
that two non-increasing cadlag functions $u,v:\mathds{R}\to [0,1]$ satisfy
\begin{equation}\label{bound'}
\sum_{k=1}^{k_0}p_kg_k*Q_{1,k}(u)(x)\leq v(x)\leq \sum_{k=1}^{k_0}p_kg_k*Q_{2,k}(u)(x),
\end{equation}
where $p_k$ and $g_k$ satisfy the assumptions in Section \ref{assumption n result} as $p_{n,k}$ and $g_{n,k}$, and $Q_{1,k}$ and $Q_{2,k}$ satisfy Lemma \ref{Q_1_LB12}. Then
\begin{equation}\label{fact'}
 L(v)> C\;\; \mbox{implies that}\;\; L(u)>C.
\end{equation}
\end{proposition}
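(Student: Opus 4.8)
The plan is to prove the contrapositive of \eqref{fact'}: assume $L(u)\le C$ and deduce $L(v)\le C$. The first step is to reformulate the condition ``$L(w)\le C$'', for a non-increasing cadlag $w:\mathds{R}\to[0,1]$, as a pointwise tail-decay statement. Unwinding \eqref{Lfunction}--\eqref{lfunction} and using $C=\log 2$, one checks by an elementary manipulation that for every $x$ with $w(x)\in(0,\tfrac12]$,
\[
 l(w;x)\le C \quad\Longleftrightarrow\quad w(x-M)\ \ge\ \bigl(1+\epsilon_1-(2w(x))^{\log b}\bigr)\,w(x);
\]
hence $L(w)\le C$ is equivalent to this inequality holding at all such $x$. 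Because $w$ is non-increasing one always has $w(x-M)\ge w(x)$, so the inequality is automatic whenever $1+\epsilon_1-(2w(x))^{\log b}\le 1$, i.e. whenever $w(x)\ge\rho:=\tfrac12\epsilon_1^{1/\log b}$ (a very small number, since $b$ is close to $1$). Consequently it suffices to establish the displayed inequality for $v$ at points $x$ with $0<v(x)<\rho$.

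Fix such an $x$. I would first extract structural information on $u$. From the lower bound in \eqref{bound'}, $Q_{1,k}(t)\ge t$ (\eqref{Q_1_LB1}), and (MT1) (which gives each $g_k$ mass at least $1-\epsilon_0$ on $[0,\infty)$), one gets $v\ge(1-\epsilon_0)u$ pointwise; hence $u(x)$ is small, and since $u$ is non-increasing it is small on $[x,\infty)$, where iterating the decay property of $u$ to the right shows $u$ decays geometrically (at rate approaching $1+\epsilon_1$ per shift by $M$). From the upper bound in \eqref{bound'} with $Q_{2,k}(t)=kt$ one gets $v\le W$, where $W:=\sum_k kp_k\,g_k*u$, and from $Q_{1,k}(t)\ge kt-c_1t^2$ (\eqref{Q_1_LB2}) one gets $v(x-M)\ge W(x-M)-c_1\sum_k p_k\,g_k*(u^2)(x-M)$. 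Thus it is enough to compare $W(x-M)$ with $W(x)$, up to the quadratic correction and the defect $W-v$.

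The core estimate comes from convolving the decay property of $u$ against $g_k$. That property in fact holds for \emph{all} values of $u$: if $u(z)\ge\tfrac12$ then $1+\epsilon_1-(2u(z))^{\log b}\le\epsilon_1\le 1\le u(z-M)/u(z)$, so for every $z$,
\[
 u(z-M)\ \ge\ (1+\epsilon_1)\,u(z)-2^{\log b}\,u(z)^{1+\log b}.
\]
Integrating against $\sum_k kp_k\,dg_k$ gives $W(x-M)\ge(1+\epsilon_1)W(x)-2^{\log b}\sum_k kp_k\,g_k*(u^{1+\log b})(x)$. On the set where $u\le\rho$ one has $u^{\log b}\le\rho^{\log b}$ and, crucially, $2^{\log b}\rho^{\log b}=\epsilon_1$ exactly, so that part of the correction consumes precisely the $\epsilon_1$ gain; what remains to absorb is (i) the contribution to $g_k*(u^{1+\log b})(x)$ of the ``bulk'' set $\{u>\rho\}$, (ii) the quadratic term $c_1\,g_k*(u^2)$, and (iii) the defect $W-v$. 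The bulk contributions are of size $O(e^{-ad})$ by the exponential marginal tail (MT2), where $d$ is the distance from $x$ to the $\rho$-level of $u$; one then argues by cases. When $d$ is large, $x$ and $x-M$ both lie deep in the region $\{u\le\rho\}$, and the estimate closes with margin because $aM$ is large relative to $\log b$ (conditions \eqref{M1}, \eqref{b4}), while the quadratic term and the defect $W-v$ are absorbed using the branching surplus $\sum_k kp_k\ge m_0>1$ of (B2), as calibrated by \eqref{b1}, \eqref{b2}, \eqref{ep_1nkappa}, \eqref{M2}. When $d$ is small, $x$ sits at or near the $\rho$-level while $x-M$ lies well inside the bulk of $u$, so $v(x-M)\ge(1-\epsilon_0)u(x-M)$ is bounded below by a fixed constant times $\rho$ and hence dominates $(1+\epsilon_1)v(x)<(1+\epsilon_1)\rho$; this case is handled separately and more directly.

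I expect the main obstacle to be exactly this quantitative reconciliation. Convolution with $g_k$ has two opposing effects on $L$: it flattens $v$ relative to $u$ (worsening the tail ratio, i.e. the ``flatness'' part of $l$) while amplifying $v$ by a factor as large as $k_0$ (improving the $\log(1/v)$ part), and one must show the net change over a single recursion step is non-positive. Making these two effects cancel with the correct sign is precisely why the six compatibility conditions \eqref{M1}--\eqref{b4} on $\epsilon_1,b,M$ (and the auxiliary $\kappa$) are imposed, and faithfully carrying the error terms through the cadlag jumps of $u$ and $v$ and across the boundary between the ``$u\le\rho$'' and ``$u>\rho$'' regimes is where most of the care will be needed.
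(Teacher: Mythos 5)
Your reformulation of $L(w)\le C$ as the pointwise statement $w(x-M)\ge\bigl(1+\epsilon_1-(2w(x))^{\log b}\bigr)w(x)$ is correct, and the contrapositive is a legitimate target; the bound $v\ge(1-\epsilon_0)u$ and the convolution of the hypothesis on $u$ against $\sum_k kp_k\,dg_k$ are also fine. The gap is in the quantitative core, and it is not a matter of missing details: the accounting you set up cannot close. With your threshold $\rho=\tfrac12\epsilon_1^{1/\log b}$, the small-$u$ part of the correction is bounded by $2^{\log b}\rho^{\log b}W(x)=\epsilon_1 W(x)$, which (as you yourself note) consumes \emph{exactly} the $(1+\epsilon_1)$ gain; what survives is only $v(x-M)\ge v(x)-(\mbox{bulk})-(\mbox{quadratic})$, which is weaker than the trivial monotonicity bound, whereas the target inequality requires a strictly positive surplus $(\epsilon_1-(2v(x))^{\log b})v(x)$ whenever $v(x)<\rho$. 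Lowering the threshold does not help: if you cut at $\theta$, the small-$u$ correction costs $(2\theta)^{\log b}W(x)$ with $W(x)$ possibly as large as $k_0 v(x)$, while the only available control on the bulk is $\sum_kp_k\bar g_k(d_\theta)\le v(x)/\theta$ coming from the lower bound in \eqref{bound'} and \eqref{Q_1_LB1}; both error terms are of order $v(x)$ times constants such as $k_0$, $c_1$, $\theta^{-1}$, while the budget $(2v(x))^{\log b}v(x)$ is smaller than $\epsilon_1 v(x)$ and can be far smaller when $v(x)\ll\rho$. In short, your hypothesis on $u$ yields a gain only on the set where $u$ is already below $\rho$, but the required gain is proportional to all of $v(x)$, including the part of its mass transported by the tails of $g_k$ from the region where $u$ is not small; nothing in your chain covers that part.

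Relatedly, the branching surplus $m_0>1$, which you invoke to ``absorb the quadratic term and the defect,'' never actually enters your inequalities: you compare $v(x-M)$ and $v(x)$ through $W=\sum_k kp_k\,g_k*u$ on both sides, so the factor $\sum_k kp_k$ cancels. In the paper the surplus is extracted quite differently and is the engine of the whole proof: starting from a near-maximizer $x_1$ of $l(v;\cdot)$ (needed since the supremum in \eqref{Lfunction} may not be attained), one shows $u(x_2)\le\frac{1+\epsilon}{m_0(1-\epsilon_0)}v(x_1)$, i.e.\ $u$ is smaller than $v$ by essentially a factor $m_0$, and this gain in the $\log(1/u)$ part of \eqref{lfunction} is what pays for the loss in the flatness part; moreover the flat-and-small point of $u$ is located at a \emph{shifted} position $x_1-y_1$, produced by Lemmas \ref{truncation_flat}--\ref{flatness} with the stopping radii $y_0,q,r$ of \eqref{y_0}--\eqref{r}, precisely to handle the regime where the mass creating $v$ near $x_1$ comes from far to the left --- the regime where your one-step pointwise propagation loses a factor it cannot recover. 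The conclusion the paper proves is also stronger than your target ($L(u)\ge L(v)+\frac14\log m_0$ when $L(v)>C$, not merely $L(u)>C$), and it is this quantitative form that makes the induction work. To salvage your route you would need an inequality comparing the gain factor evaluated at $v(x)$ with the gain factors evaluated at the (roughly $m_0$ times smaller) values of $u$, together with a mechanism replacing the shift $y_1$; as written, the proposal does not contain either.
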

In order to prove Proposition \ref{prop_fact}, a few observations, notation and lemmas are needed. Starting from $L(v)> C$, one obtains, by definition \eqref{Lfunction} of the Laypunov function, that there exists an $x_1\in\mathds{R}$ such that
\begin{equation}\label{x_1}
v(x_1)\leq \frac{1}{2}\;\; \mbox{and}\;\; l(v;x_1)\geq \max\{C,L(v)-\frac{1}{4}\log m_0\}.
\end{equation}
By definition \eqref{lfunction} of $l(v;x)$, one obtains that $v$ is small and flat at $x_1$ in the following sense:
\begin{equation}\label{epsilon}
1+\epsilon:=\frac{v(x_2)}{v(x_1)}< 1+\epsilon_1
\end{equation}
and
\begin{equation}\label{f_0}
f_0:=v(x_1)<(\epsilon_1-\epsilon)^{1/\log b}e^{-C}< \frac{1}{2},
\end{equation}
where $x_2:=x_1-M$. Using the bounds \eqref{bound'} and \eqref{epsilon}, one gets that
\begin{equation}\label{Q_1Q_2}
 \sum_{k=1}^{k_0}p_kg_k*Q_{1,k}(u)(x_2)\leq(1+\epsilon)
 \sum_{k=1}^{k_0}p_kg_k*Q_{2,k}(u)(x_1),
\end{equation}
from which we will search for a flat piece in $u(x)$ where $u(x)$ is also small.

To control the value of $u(x)$, we derive here some preliminary estimates of $u(x)$ at $x_1$ and $x_2$, which will be used later to control the value of $u(x)$ at other places. For $i=1,2$, first applying the Chebyshev inequality and then applying \eqref{bound'} and the fact $\bar{g}_k(0)\geq 1-\epsilon_0$ from assumption (MT1), one gets
\begin{eqnarray}\label{u(x_i)}
  \sum_{k=1}^{k_0}p_k Q_{1,k}(u)(x_i)&\leq& \sum_{k=1}^{k_0}p_k\frac{1}{\bar{g}_k(0)}\int_{\mathds{R}}
  Q_{1,k}(u)(x_i-y)dg_k(y)\nonumber\\
  &\leq & \frac{1}{1-\epsilon_0}v(x_i).
\end{eqnarray}
This, together with the lower bound \eqref{Q_1_LB1} on $Q_{1,k}$, the definition \eqref{f_0} of $f_0$ and the definition \eqref{epsilon} of $\epsilon$, implies that
\begin{equation}\label{u(x_1)}
  u(x_1)\leq \frac{f_0}{1-\epsilon_0},
\end{equation}
and
\begin{equation}\label{u(x_2)1}
  u(x_2)\leq \frac{1+\epsilon}{1-\epsilon_0}f_0.
\end{equation}
A finer estimate of $u(x_2)$ can be obtained and will be needed. First, using \eqref{u(x_i)} and the lower bound \eqref{Q_1_LB2} on $Q_{1,k}$, one gets
$$\left(\sum_{k=1}^{k_0}kp_k -cu(x_2)\right)u(x_2)\leq \frac{1+\epsilon}{1-\epsilon_0}f_0.$$
By combining the first estimate \eqref{u(x_2)1} of $u(x_2)$, the bound \eqref{f_0} on $f_0$ and the restriction \eqref{b2}, the coefficient multiplying $u(x_2)$ on the left side of the last inequality is at least
\begin{eqnarray*}
&&\sum_{k=1}^{k_0}kp_k-c_1u(x_2)\geq \sum_{k=1}^{k_0}kp_k-c_1\frac{1+\epsilon}{1-\epsilon_0}f_0 \\
&\geq & \sum_{k=1}^{k_0}kp_k-c_1\frac{1+\epsilon_1}{1-\epsilon_0}(\epsilon_1-\epsilon)^{1/\log b}e^{-C}\\
&\geq & \sum_{k=1}^{k_0}kp_k-c_1\frac{1+\epsilon_1}{1-\epsilon_0}\epsilon_1^{1/\log b}\geq m_0.
\end{eqnarray*}
Therefore, we conclude that
\begin{equation}\label{u(x_2)2}
  u(x_2)\leq \frac{1+\epsilon}{m_0(1-\epsilon_0)}f_0
  =\frac{1+\epsilon}{m_0(1-\epsilon_0)}v(x_1).
\end{equation}

To control the flatness of $u(x)$, we define some more auxiliary variables and then state some lemmas. The constants $\delta=\kappa(\epsilon_1-\epsilon)$, $\epsilon'=\epsilon+\delta$, $\epsilon''=\epsilon+2\delta$ and $\epsilon^{(3)}=\epsilon+3\delta$ are defined to monitor the flatness change. Note that $\epsilon,\epsilon',\epsilon'',\epsilon^{(3)}<\epsilon_1$ because $\kappa<\frac{1}{100}$. We somewhat simplify the argument in \cite{BZ}. Set
\begin{equation}\label{y_0}
  y_0=\frac{1}{a}\log \frac{2k_0}{\delta f_0},
\end{equation}
\begin{equation}\label{q}
  q=\inf\{y\geq M/2:u(x_2-y)>(4k_0)^2u(x_1-y)\}
\end{equation}
and
\begin{equation}\label{r}
  r=y_0\wedge\left\{\begin{array}{ll}
    q,&\text{if}\; u(x_2-q)^-\geq (4k_0)u(x_1-(q+\frac{M}{2}));\\
    q-\frac{M}{2},&\text{otherwise,}
  \end{array}\right.
\end{equation}
where $u(x)^-:=\lim_{y\to x-}f(y)$ is the left limit of $f$ at $x$. Intuitively, $q$ is used to denote the first nonflatness place to the left of $x_1$. When $r<y_0$, $r$ is used to denote a nonflat interval, namely, it is easy to check that
\begin{equation}\label{r_steep}
u(x_2-y)\geq (4k_0)u(x_1-y)\;\;\mbox{for all}\;\;y\in(r,r+M/2].
\end{equation}
We can now state the following sequence of lemmas, whose proofs will be discussed in the next subsection. The convention of
$$\int_a^bf(x)dg(x)=\int_{(a,b]}f(x)dg(x)$$
for $a,b\in\mathds{R}$ will be made throughout the rest of the paper.
\begin{lemma}\label{truncation_flat}
Assume that \eqref{f_0} and \eqref{Q_1Q_2} hold. Then,
\begin{equation}\label{truncation}
  \sum_{k=1}^{k_0}p_k\int_{-\infty}^{r}Q_{1,k}(u)(x_2-y)dg_k(y)
  \leq(1+\epsilon')
 \sum_{k=1}^{k_0}p_k\int_{-\infty}^{r}Q_{2,k}(u)(x_1-y)dg_k(y).
\end{equation}
\end{lemma}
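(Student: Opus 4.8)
The plan is to exploit \eqref{Q_1Q_2}, which compares the full convolutions $\sum_k p_k g_k * Q_{1,k}(u)$ at $x_2$ and $\sum_k p_k g_k * Q_{2,k}(u)$ at $x_1$, and to pass from the integrals over all of $\mathds{R}$ to the integrals over $(-\infty,r]$. The difference between the two is the contribution of the tail $y\in(r,\infty)$, and the whole point is that this tail contribution is negligible relative to $v(x_1)=f_0$. So the first step is to write, for each $i=1,2$,
\[
\sum_{k=1}^{k_0}p_k\int_{\mathds{R}}Q_{j,k}(u)(x_i-y)\,dg_k(y)
 = \sum_{k=1}^{k_0}p_k\int_{-\infty}^{r}Q_{j,k}(u)(x_i-y)\,dg_k(y)
 + \sum_{k=1}^{k_0}p_k\int_{r}^{\infty}Q_{j,k}(u)(x_i-y)\,dg_k(y),
\]
and to bound the tail terms. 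For the $x_1$-side (the $Q_{2,k}$ integral) the tail is simply thrown away, since $Q_{2,k}\ge 0$; this only helps the desired inequality. For the $x_2$-side (the $Q_{1,k}$ integral) I need an upper bound on $\sum_k p_k\int_r^\infty Q_{1,k}(u)(x_2-y)\,dg_k(y)$ of the form $(\text{small})\cdot f_0$.

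The second step is to produce that tail estimate. Here I would split the tail at $y_0=\tfrac1a\log\tfrac{2k_0}{\delta f_0}$, defined in \eqref{y_0}. For $y\in(r,y_0]$ one uses the definition of $r$: by \eqref{q}, \eqref{r} and the ``steepness'' consequence \eqref{r_steep}, on this range $u(x_1-y)$ is comparable to $u(x_2-y)$ (up to the factor $4k_0$), while by \eqref{Q_1_LB2} one has $Q_{1,k}(u)\le k u\le k_0 u$; combining, the $Q_{1,k}(u)(x_2-y)$ integrand on $(r,y_0]$ is controlled by a constant multiple of the corresponding $Q_{2,k}(u)(x_1-y)$ integrand, which is exactly the slack $(1+\epsilon')$ versus $(1+\epsilon)$ is meant to absorb (recall $\epsilon'=\epsilon+\delta$, $\delta=\kappa(\epsilon_1-\epsilon)$). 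For $y>y_0$ one uses the exponential tail: assumption (MT2) gives $\bar g_k(y)\le e^{-a(y-y')}\bar g_k(y')$ for large increments, hence $\int_{y_0}^\infty Q_{1,k}(u)(x_2-y)\,dg_k(y)\le \int_{y_0}^\infty dg_k(y)=\bar g_k(y_0)\le C e^{-ay_0}\le C'\,\delta f_0/(2k_0)$ by the choice of $y_0$ — so this piece is at most $O(\delta f_0)$ after summing over $k\le k_0$, again absorbed into the $\epsilon\to\epsilon'$ slack (using also $f_0=v(x_1)\le\frac{1}{1-\epsilon_0}\cdot(\text{something comparable to }u\text{ near }x_1)$, via the preliminary estimates \eqref{u(x_1)}, \eqref{u(x_2)1}). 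Assembling: the left side of \eqref{truncation} equals the $\mathds{R}$-integral of $Q_{1,k}(u)$ at $x_2$ minus a term bounded by $O(\delta)\sum_k p_k\int_{-\infty}^r Q_{2,k}(u)(x_1-y)\,dg_k(y)$ plus an $O(\delta f_0)$ error; feeding in \eqref{Q_1Q_2} and $\delta=\kappa(\epsilon_1-\epsilon)$ small yields the $(1+\epsilon')$ bound.

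The main obstacle I anticipate is the bookkeeping on the range $(r,y_0]$: one must argue that the definition of $q$ in \eqref{q} and the case split in \eqref{r} really do force $u(x_2-y)$ and $u(x_1-y)$ to be comparable (within a factor $(4k_0)^2$, say) for \emph{all} $y\in(r,y_0]$, not just at isolated points, and that the left-limit subtleties in \eqref{q}--\eqref{r} (the $u(x)^-$ versus $u(x)$ distinction, needed because $u$ is only cadlag) do not break the comparison at the endpoint. Once that comparison is in hand, replacing $Q_{1,k}$ by $Q_{2,k}$ costs only the ratio $Q_{1,k}(u)/Q_{2,k}(u)\le 1$ on the $x_2$ integrand versus an extra factor $(4k_0)^2$ from shifting the argument from $x_2-y$ to $x_1-y$ — and this bounded multiplicative loss, multiplied against the \emph{small} total mass $\bar g_k(r)\le\bar g_k(M/2)$ that the tail carries, is what finally fits inside $\delta$. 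The contributions from $y>y_0$ and the translation between $f_0$ and the $u$-values are routine given the earlier estimates \eqref{u(x_1)}--\eqref{u(x_2)2}, so the real work is this uniform flatness-transfer on $(r,y_0]$.
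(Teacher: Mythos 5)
Your decomposition has the roles of the two tails exactly reversed, and this is a genuine gap, not a presentational one. Write $A_{\le r}=\sum_k p_k\int_{-\infty}^{r}Q_{1,k}(u)(x_2-y)dg_k(y)$, $A_{>r}$ for the complementary piece, and similarly $B_{\le r},B_{>r}$ for the $Q_{2,k}(u)(x_1-\cdot)$ integrals. Then \eqref{Q_1Q_2} gives
\begin{equation*}
A_{\le r}\;\le\;(1+\epsilon)B_{\le r}\;+\;\Bigl[(1+\epsilon)B_{>r}-A_{>r}\Bigr],
\end{equation*}
so to reach \eqref{truncation} you must control the bracket: either the $x_1$-side tail $B_{>r}$ must be small (relative to $\delta$ times a quantity of size at least $f_0$), or the $x_2$-side tail $A_{>r}$ must be \emph{large} enough to dominate $(1+\epsilon')B_{>r}$. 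Your plan does the opposite on both counts: you throw away the $x_1$-side tail ``for free'' (it enters with a plus sign, so it cannot be discarded), and you seek an \emph{upper} bound on the $x_2$-side tail (it enters with a minus sign, so an upper bound on it is of no use --- for an upper bound on $A_{\le r}$ one may always use $A_{\le r}\le A_{\mathds{R}}$ and nothing about $A_{>r}$ is needed).

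Even after relabeling, the estimate you propose on $(r,y_0]$ is not available. The definition \eqref{q} of $q$ controls the ratio $u(x_2-y)/u(x_1-y)$ only for $y<q$, i.e.\ to the \emph{left} of $q$; on $(r,r+M/2]$ the consequence \eqref{r_steep} goes the other way, $u(x_2-y)\ge 4k_0\,u(x_1-y)$, and beyond $r+M/2$ (in particular on $(q+M,y_0]$) there is no pointwise comparison at all. Moreover $u(x_1-y)$ there need not be comparable to $f_0=v(x_1)$, which can be arbitrarily small, while $\bar g_k(r)$ is a fixed constant once $r$ is of order $M$; hence the $(r,y_0]$ contribution cannot in general be absorbed into the additive slack $\delta=\kappa(\epsilon_1-\epsilon)$. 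This is precisely why the paper's proof splits into two cases. When $r=y_0$, the right tail really is small: (MT2) and \eqref{y_0} give $\bar g_k(y_0)\le \delta f_0/(2k_0)$, so $B_{>r}\le k_0\bar g_k(y_0)\le \delta f_0/2$, which is absorbed using $\delta\sum_k p_kg_k*Q_{2,k}(u)(x_1)\ge\delta v(x_1)=\delta f_0$ from \eqref{bound'}. When $r<y_0$, no smallness is claimed; instead one proves the domination $\int_r^\infty u(x_2-y)dg_k(y)\ge 2k_0\int_r^\infty u(x_1-y)dg_k(y)$ from \eqref{r_steep}, (MT2) and \eqref{M1} (a three-piece decomposition with a shift by $M/2$), so that dropping both tails preserves \eqref{Q_1Q_2} with room to spare since $Q_{1,k}(u)\ge u$ and $Q_{2,k}(u)\le k_0u$. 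This dichotomy and the domination argument in the first case are the missing core of the proof.
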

\begin{lemma}\label{conv_flat}
  If \eqref{f_0} and \eqref{truncation} are satisfied, then there exist some $1\leq k\leq k_0$ and $r'$ such that
  \begin{equation}\label{linearization}
    \int_{-\infty}^{r'}u(x_2-y)dg_k(y)\leq (1+\epsilon'')\int_{-\infty}^{r'}u(x_1-y)dg_k(y),
  \end{equation}
  where $r'=r$ when $r'>M$.
\end{lemma}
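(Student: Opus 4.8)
\textbf{Proof proposal for Lemma \ref{conv_flat}.}

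The plan is to move from the inequality \eqref{truncation}, which compares sums of convolutions of the nonlinear functions $Q_{1,k}$ and $Q_{2,k}$ applied to $u$, down to a single comparison of the \emph{linear} truncated convolutions $\int_{-\infty}^{r'}u(x_2-y)\,dg_k(y)$ against $\int_{-\infty}^{r'}u(x_1-y)\,dg_k(y)$ for some fixed index $k$. The two gaps to bridge are: (i) replacing $Q_{1,k}(u)$ and $Q_{2,k}(u)$ by $u$ itself, which is where the bounds \eqref{Q_1_LB1} and \eqref{Q_1_LB2} from Lemma \ref{Q_1_LB12} enter, together with the smallness of $u$ near $x_1,x_2$ coming from \eqref{u(x_1)}, \eqref{u(x_2)2} and the flatness information encoded in $q$ and $r$; and (ii) passing from a weighted average over $k$ (with weights $kp_k$, roughly) to a single term, which is a pigeonhole/averaging step. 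I would also keep track of the possibility $r = y_0$ separately, since the definition \eqref{r} splits there and the statement only claims $r'=r$ when $r>M$ (so the case $r\le M$, or $r=y_0$, is allowed to produce a different $r'$, presumably $r' = y_0$ or something even smaller).

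First I would rewrite the right side of \eqref{truncation} using $Q_{2,k}(u) = ku$ exactly, so that side is literally $\sum_k k p_k \int_{-\infty}^r u(x_1-y)\,dg_k(y)$. For the left side, I would use the lower bound $Q_{1,k}(u)\ge ku - c_1 u^2$ from \eqref{Q_1_LB2}, so
$$\sum_{k=1}^{k_0} k p_k \int_{-\infty}^r u(x_2-y)\,dg_k(y) \;\le\; \sum_{k=1}^{k_0} p_k\!\int_{-\infty}^r\! \big(Q_{1,k}(u)(x_2-y) + c_1 u(x_2-y)^2\big)\,dg_k(y).$$
The first term on the right is controlled by \eqref{truncation}; the error term $c_1\sum_k p_k\int_{-\infty}^r u(x_2-y)^2\,dg_k(y)$ must be absorbed into the flatness slack $\epsilon'' - \epsilon' = \delta = \kappa(\epsilon_1-\epsilon)$. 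To do that I would bound $u(x_2-y)^2 \le u(x_2-y)\cdot u(x_2)$ — no, more carefully: for $y$ in the relevant range $u(x_2-y)$ is comparable to $u(x_1-y)$ by the non-steepness (the complement of the event defining $q$, valid on $(M/2, r]$ or $(M/2, q]$), and near $y\approx M$ one uses the specific estimate \eqref{u(x_2)2}; for the far tail $y > y_0$ one would have truncated already (this is why $r\le y_0$), using the exponential decay \eqref{M1}–type bound on $g_k$ to make the tail contribution at most a $\delta f_0$-order quantity, cf.\ the choice of $y_0$ in \eqref{y_0}. The bound \eqref{b1} on $b$ is presumably exactly what makes $c_1$ times this error term smaller than the required fraction of $\delta$ times the main term; I would verify that the powers of $\epsilon_1$, $\kappa$, and $1-\epsilon_0$ line up with \eqref{b1}.

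After step (i) I will have
$$\sum_{k=1}^{k_0} k p_k \int_{-\infty}^r u(x_2-y)\,dg_k(y) \;\le\; (1+\epsilon'')\sum_{k=1}^{k_0} k p_k \int_{-\infty}^r u(x_1-y)\,dg_k(y),$$
possibly with $r$ replaced by $y_0$ in the degenerate case. The final step is an averaging argument: if for \emph{every} $k$ with $p_k>0$ we had $\int_{-\infty}^{r'}u(x_2-y)\,dg_k(y) > (1+\epsilon'')\int_{-\infty}^{r'}u(x_1-y)\,dg_k(y)$, then multiplying by $kp_k$ and summing would contradict the displayed inequality — so at least one $k$ works, provided the denominators $\int_{-\infty}^{r'}u(x_1-y)\,dg_k(y)$ are not all zero, which holds because $u(x_1-y)\ge u(x_1)>0$ — wait, $u(x_1)$ could be zero in principle, but \eqref{u(x_1)} only gives an upper bound; here I would instead note that if all these integrals vanish the statement \eqref{linearization} is trivially true ($0\le 0$), and otherwise the pigeonhole goes through. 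One must be slightly careful that the inequality after step (i) is between sums with possibly one term large and negative contributions impossible (all terms nonnegative), so the pigeonhole is clean. \textbf{The main obstacle} I anticipate is step (i): controlling the quadratic error $c_1 u^2$ uniformly over the truncation range $(-\infty, r]$, because $u(x_2-y)$ is only known to be small and flat through the somewhat delicate bookkeeping of $q$ and $r$ (the near region $y\le M/2$, the "non-steep" region $(M/2,r]$ via \eqref{q}, and the truncated tail via $y_0$); getting the constants to fit inside $\delta$ is exactly what forces restriction \eqref{b1}, and reproducing that matching is the crux of the argument.
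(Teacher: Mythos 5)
Your overall skeleton does match the paper's argument: take $Q_{2,k}(u)=ku$ exactly on the right of \eqref{truncation}, use $Q_{1,k}(u)\ge ku-c_1u^2$ on the left, absorb the quadratic error into the slack $\delta=\epsilon''-\epsilon'$, and then a pigeonhole over $k$ with weights $kp_k$ (the paper does the absorption multiplicatively, via the pointwise bound $Q_{1,k}(u)(x_2-y)\ge ku(x_2-y)(1-\tfrac{\delta}{2})$ and the ratio $\tfrac{1+\epsilon'}{1-\delta/2}\le 1+\epsilon''$, but that is equivalent to your additive version once the needed smallness is available).

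The genuine gap is in exactly the step you flag as the crux, and it is not just a matter of matching constants. The absorption requires the \emph{pointwise} bound $u(x_2-y)\le \tfrac{\delta}{2c_1}$ for all $y\le r$, and this bound is not automatic: it is obtained by chaining the non-steepness $u(x_2-y)\le(4k_0)^2u(x_1-y)$ in steps of $M$ down from $y=r$, which gives $u(x_2-y)\le(4k_0)^{2r/M+2}u(x_2)$, and then using $r\le y_0$, \eqref{M1} (so that $(4k_0)^{2y_0/M}\lesssim(\delta f_0)^{-1/2}$), \eqref{f_0} and \eqref{b1}. Note the exponential decay of $g_k$ plays no role in this lemma (that was spent in Lemma \ref{truncation_flat}); $y_0$ enters only to cap the number of chaining steps. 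More importantly, the chaining needs $q>M/2$. When $q=M/2$ and in addition $u(x_2-M/2)>4k_0\,u(x_2)$ (a steep jump immediately to the left of $x_2$), no smallness of $u(x_2-y)$ is available near $y\approx M/2$, the quadratic error cannot be absorbed, and \eqref{linearization} with $r'=r$ need not hold at all. This is precisely why the statement allows a different $r'$: the paper sets $r'=\inf\{y\ge 0: u(x_2-y)>4k_0u(x_2)\}<M/2$, shows the contribution of $(r',r]$ to \eqref{truncation} can be discarded because there $Q_{1,k}(u)(x_2-y)\ge u(x_2-y)\ge 4k_0u(x_1-y)\ge(1+\epsilon')Q_{2,k}(u)(x_1-y)$ pointwise (so dropping it only strengthens the inequality), and then reruns the smallness argument on $(-\infty,r']$ where $u(x_2-r')\le 4k_0u(x_2)$. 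Your proposal instead attributes the flexible $r'$ to the case $r=y_0$ (and guesses $r'=y_0$), which is not where the difficulty lies, and it supplies no mechanism for the immediate-steepness case; as written, the plan would be trying to prove the $r'=r$ inequality in a situation where it can fail.
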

\begin{lemma}\label{flatness}
 Suppose \eqref{linearization} holds. Then either
 \begin{itemize}
 \item[(a)] $u(x_2-y_1)\leq (1+\epsilon^{(3)})u(x_1-y_1)$ for some $y_1\leq r'\wedge M$, or
 \item[(b)] $u(x_2-y_1)\leq (1+\epsilon''-\delta e^{ay_1/8})u(x_1-y_1)$ for some $y_1\in (M,r]$.
 \end{itemize}
\end{lemma}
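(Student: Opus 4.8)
The plan is to start from the linearized flatness inequality \eqref{linearization} and extract from it a single point $y_1$ at which $u(x_2-y_1)/u(x_1-y_1)$ is suitably controlled, distinguishing the near region $y_1\le r'\wedge M$ (giving case (a)) from the far region $y_1\in(M,r]$ (giving case (b), with the exponential improvement coming from the tail assumption (MT2)). The key mechanism is: an integral ratio bound of the form $\int_{-\infty}^{r'}u(x_2-y)\,dg_k(y)\le(1+\epsilon'')\int_{-\infty}^{r'}u(x_1-y)\,dg_k(y)$ cannot hold if $u(x_2-y)$ exceeded $(1+\epsilon^{(3)})u(x_1-y)$ at \emph{every} $y\le r'\wedge M$ \emph{and}, in the far region, exceeded the even larger threshold $(1+\epsilon''-\delta e^{ay/8})u(x_1-y)$ at every $y\in(M,r]$; otherwise the left side would be strictly larger than the right. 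So I would argue by contradiction: assume neither (a) nor (b) holds, i.e. $u(x_2-y)>(1+\epsilon^{(3)})u(x_1-y)$ for all $y\le r'\wedge M$ and $u(x_2-y)>(1+\epsilon''-\delta e^{ay/8})u(x_1-y)$ for all $y\in(M,r]$, and derive a violation of \eqref{linearization}.

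First I would split $\int_{-\infty}^{r'}=\int_{-\infty}^{r'\wedge M}+\int_{(r'\wedge M,r']}$ (the second piece empty unless $r'>M$, in which case $r'=r$ as recorded in Lemma \ref{conv_flat}). On the near part, the assumed pointwise bound gives $\int_{-\infty}^{r'\wedge M}u(x_2-y)\,dg_k(y)\ge(1+\epsilon^{(3)})\int_{-\infty}^{r'\wedge M}u(x_1-y)\,dg_k(y)$, i.e. a surplus of at least $3\delta$ over the factor $(1+\epsilon)$, and in particular strictly more than the allowed $(1+\epsilon'')=(1+\epsilon+2\delta)$ on that part alone. To turn this into a contradiction with the full inequality I must also show the far part $\int_{(M,r]}u(x_2-y)\,dg_k(y)$ does not "cancel" this surplus — that is, it is not so much smaller than $(1+\epsilon'')\int_{(M,r]}u(x_1-y)\,dg_k(y)$ that the total reverses. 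Here is where assumption (MT2) and the definitions of $r$, $y_0$, $q$ enter: on $(M,r]$ we have from \eqref{r_steep} the steepness $u(x_2-y)\ge 4k_0\,u(x_1-y)$ (when $r<y_0$), and combined with the exponential decay $\bar g_k(x+M)\le e^{-aM}\bar g_k(x)$ one controls $\int_{(M,r]}u(x_1-y)\,dg_k(y)$ by something comparable to, or dominated by, $f_0$ times a small factor — using the first estimates \eqref{u(x_1)}, \eqref{u(x_2)2} on $u(x_1),u(x_2)$ and the choice $y_0=\frac1a\log\frac{2k_0}{\delta f_0}$ so that the tail mass beyond $y_0$ contributes at most $\approx\delta f_0/(2k_0)$. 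Then the far-part deficit is of smaller order than the near-part surplus $3\delta\int_{-\infty}^{r'\wedge M}u(x_1-y)\,dg_k(y)$, and the sum still exceeds $(1+\epsilon'')\int_{-\infty}^{r'}u(x_1-y)\,dg_k(y)$, contradicting \eqref{linearization}.

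The main obstacle is the bookkeeping in the far region: one must carefully track that the hypothesized violation $u(x_2-y)>(1+\epsilon''-\delta e^{ay/8})u(x_1-y)$ is self-consistent with the steepness \eqref{r_steep} and the definition of $q$, and that the quantitative gap produced by $(\epsilon^{(3)}-\epsilon'')=\delta$ on the near part genuinely dominates whatever is lost on $(M,r]$. This requires using the parameter restrictions \eqref{M1}–\eqref{b4} — in particular \eqref{M1} to make $e^{-aM/2}$ and powers thereof negligible against $\kappa$ and $\delta=\kappa(\epsilon_1-\epsilon)$ — so the interaction of the factors $4k_0$, $e^{-aM/2}$, $\delta$, and $f_0$ comes out with the right sign. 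I expect the near-region estimate to be essentially immediate, and essentially all the work to be in bounding $\int_{(M,r]}$-type integrals via (MT2) and in verifying that the case split at $y=M$ (matching the $r'>M\Rightarrow r'=r$ clause of Lemma \ref{conv_flat}) is exhaustive. I would model the precise constants on the corresponding step in \cite{BZ}, adapting for the fact that here $\epsilon^{(3)}$ already absorbs the allowed growth $\delta e^{ay/8}$ in case (b).
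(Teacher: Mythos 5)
Your skeleton coincides with the paper's appendix proof: negate (a) and (b), integrate the negation of (a) over $(-\infty,r'\wedge M]$ and of (b) over $(M,r]$ (when $r'>M$, so $r'=r$), sum, and contradict \eqref{linearization}. Since $\epsilon^{(3)}=\epsilon''+\delta$, after summing, the whole lemma reduces to one quantitative claim in which both the near-part surplus and the far-part deficit carry the same factor $\delta$, namely
\begin{equation*}
\int_{-\infty}^{M}u(x_1-y)\,dg_k(y)\;\geq\;\int_{M}^{r}e^{ay/8}\,u(x_1-y)\,dg_k(y).
\end{equation*}
Your proposal gets this reduction right, but the mechanism you give for the claim has a genuine gap. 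You invoke the steepness bound \eqref{r_steep} ``on $(M,r]$''; but \eqref{r_steep} holds only for $y\in(r,r+M/2]$, i.e.\ beyond $r$, and it is a \emph{lower} bound on $u(x_2-y)/u(x_1-y)$, which is of no use here. What the far region actually requires is an \emph{upper} bound on how fast $u(x_1-\cdot)$ can grow moving left up to $r$, and this comes from the definition \eqref{q} of $q$ together with $r\le q$: for $y\in[M/2,r]$ one has $u(x_1-y-M)=u(x_2-y)\le(4k_0)^2u(x_1-y)$, which iterates blockwise to $u(x_1-y)\le(4k_0)^{2l+2}u(x_1-M/2)$ for $y\in[lM,lM+M]$, $y\le r$. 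Combined with (MT2) in the form $\bar g_k(lM)\le e^{-a(lM-M/2)}\bar g_k(M/2)$ and with \eqref{M1}, this beats the weight $e^{ay/8}$, giving $\int_{M}^{r}e^{ay/8}u(x_1-y)dg_k(y)\le\frac14 u(x_1-M/2)\bar g_k(M/2)$, which by monotonicity of $u$ is absorbed into $\int_{M/2}^{M}u(x_1-y)dg_k(y)$. None of this appears in your outline.

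Your alternative comparison --- bounding the (unweighted) far integral by roughly $\delta f_0/(2k_0)$ and setting it against the near-part surplus $3\delta\int_{-\infty}^{r'\wedge M}u(x_1-y)dg_k(y)$ --- cannot be closed as stated. First, \eqref{linearization} is an inequality for one particular $k$ involving plain integrals of $u$ against $g_k$, and nothing in the hypotheses gives a \emph{lower} bound on $\int_{-\infty}^{M}u(x_1-y)dg_k(y)$ in terms of $f_0$: the estimates \eqref{u(x_1)}, \eqref{u(x_2)1}, \eqref{u(x_2)2} are all upper bounds, and the recursion \eqref{bound'} relates $f_0=v(x_1)$ only to the $k$-averaged integrals, not to the single $k$ singled out by Lemma \ref{conv_flat}. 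Second, the deficit carries the weight $\delta e^{ay/8}$, which is as large as $(2k_0/(\delta f_0))^{1/8}$ near $y=y_0$, so it cannot simply be dropped. Third, there is no ``tail mass beyond $y_0$'' to discard inside this lemma, since $r\le y_0$; that truncation was already performed in Lemma \ref{truncation_flat}. So the contradiction framework is right, but the key estimate must be run through the growth control supplied by $q$ (and the parameter choice \eqref{M1}), not through \eqref{r_steep} or $f_0$-based bounds.
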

Lemma \ref{conv_flat} and Lemma \ref{flatness} are analogs of \cite[Lemma 3.5, Proposition 3.2]{BZ}, respectively. Equipped with lemma \ref{flatness}, we are ready to prove Proposition \ref{prop_fact}.

\begin{proof}[Proof of Proposition \ref{prop_fact} assuming Lemma \ref{flatness}]\quad We will compare $L(u)$ and $L(v)$ based on \eqref{u(x_2)2} and Lemma \ref{flatness}. As Lemma \ref{flatness} suggests, two different cases will be discussed separately.

\emph{Case (a)}: Assume $u(x_2-y_1)\leq (1+\epsilon^{(3)})u(x_1-y_1)$ for some $y_1\leq r'\wedge M$. Then, \eqref{u(x_2)2} implies that
$$u(x_1-y_1)\leq u(x_2)\leq \frac{1+\epsilon}{m_0(1-\epsilon_0)}v(x_1).$$
Therefore, it follows by the definition \eqref{lfunction} of $l(u;x)$ that
\begin{eqnarray*}
  l(u,x_1-y_1)-l(v,x_1)
  &\geq &\log\frac{v(x_1)}{u(x_1-y_1)}+ \log_b\frac{\epsilon_1-\epsilon^{(3)}}{\epsilon_1-\epsilon}\\
  &\geq & \log \frac{m_0(1-\epsilon_0)}{1+\epsilon} +\log_b(1-3\kappa)\\
  &\geq & \log m_0 -2(\epsilon_1+\epsilon_0)-\frac{6\kappa}{\log b}\geq \frac{\log m_0}{2},
\end{eqnarray*}
where \eqref{ep_1nkappa} guarantees the last inequality.

\emph{Case (b)}: Assume $u(x_2-y_1)\leq (1+\epsilon''-\delta e^{ay_1/8})u(x_1-y_1)$ for some $y_1\in (M,r]$. Then, the definition \eqref{r} of $r$ and \eqref{u(x_2)2} imply that
$$u(x_1-y_1)\leq (4k_0)^{2y_1/M+2}u(x_1-M/2)\leq (4k_0)^{2y_1/M+2}\frac{1+\epsilon}{m_0(1-\epsilon_0)}v(x_1).$$
Therefore, it follows that
\begin{eqnarray*}
  l(u,x_1-y_1)-l(v,x_1)
  &=&\log\frac{v(x_1)}{u(x_1-y_1)}+ \log_b\frac{\epsilon_1-\epsilon''+\delta e^{ay_1/8}}{\epsilon_1-\epsilon}\\
  &\geq & \log \frac{m_0(1-\epsilon_0)}{(1+\epsilon)(4k_0)^{2y_1/M+2}}+ \log_b(1-2\kappa+\kappa e^{ay_1/8})\\
  &\geq & \log m_0 -2(\epsilon_0+\epsilon_1)-\frac{2\log (4k_0)}{M}y_1 -2\log(4k_0) +\frac{\log \kappa + ay_1/8}{\log b}.
\end{eqnarray*}
Rewrite the last term $\frac{ay_1}{8\log b}$ as $\frac{ay_1}{16\log b}+\frac{ay_1}{16\log b}$, use $y_1\geq M$ in one summand and deduce that the above quantity is at least
$$  \log m_0 -2(\epsilon_0+\epsilon_1+\log(4k_0))+\frac{\log \kappa}{\log b}+\frac{aM}{16\log b}+y_1(\frac{a}{16\log b}-\frac{2\log(4k_0)}{M})
  \geq  \frac{1}{2}\log m_0,$$
where $\eqref{b4}$ and $\eqref{M2}$ guarantee the last inequality.

To wrap the argument up, both cases imply, by \eqref{lfunction}, \eqref{x_1} and $C=-\log\frac{1}{2}$,
$$\log\frac{1}{u(x_1-y_1)}\geq l(u,x_1-y_1) \geq C +\frac{1}{2}\log m_0\geq -\log\frac{1}{2},$$
which implies that $u(x_1-y_1)\leq \frac{1}{2}$. Therefore, by the definition \eqref{Lfunction} of $L(u)$ and \eqref{x_1} again,
$$L(u)\geq l(u,x_1-y_1)\geq l(v,x_1)+\frac{1}{2}\log m_0 \geq L(v)+\frac{1}{4}\log m_0\geq  L(v), $$
from which \eqref{fact'} follows. Thus, the proof of Proposition \ref{prop_fact} is complete.
\end{proof}

\subsection{Proof of lemmas}\label{proof of lemmas}

With the assumption (MT2), the proof of \cite[Proposition 3.2]{BZ} carries over (with some change of notation) to the proof of Lemma \ref{flatness} assuming Lemma \ref{conv_flat}. For completeness, we bring the proof in the appendix. The proof of Lemma \ref{conv_flat} will be presented first, and then the proof of Lemma \ref{truncation_flat}.

\begin{proof}[Proof of Lemma \ref{conv_flat}]
\quad When $q>M/2$, we have $u(x_2-y)\leq (4k_0)^2u(x_1-y)$ for $y\in [M/2,q]$. Thus, one obtains that, for any $y\leq r\leq q$,
$$u(x_2-y)\leq u(x_2-r)\leq (4k_0)^{2r/M+2}u(x_2).$$
Since $r\leq y_0=\frac{1}{a}\log \frac{2k_0}{\delta f_0}$, one has, using \eqref{u(x_2)1}, that the above is at most
$$(4k_0)^{2y_0/M+2}\frac{1+\epsilon}{1-\epsilon_0}f_0
<\frac{2(4k_0)^2}{1-\epsilon_0}(4k_0)^{\frac{2}{aM}\log \frac{2k_0}{\delta f_0}}f_0=\frac{2(4k_0)^2}{1-\epsilon_0}(\frac{2k_0}{\delta f_0})^{\frac{2}{aM}\log(4k_0)}f_0.$$
Note that $\frac{2}{aM}\log (4k_0)<\frac{1}{2}$ from \eqref{M1}. Applying the bound \eqref{f_0} on $f_0$, the above quantity is at most
$$\frac{2(4k_0)^2}{1-\epsilon_0}\frac{\sqrt{2k_0}f_0^{1/2}}{\delta^{1/2}}
=\frac{8(2k_0)^{5/2}f_0^{1/2}}{(1-\epsilon_0)\delta^{3/2}}\delta
<\frac{8(2k_0)^{5/2}(\epsilon_1-\epsilon)^{1/2\log b-3/2}}{(1-\epsilon_0)\kappa^{3/2}}\delta.$$
Therefore, it follows from \eqref{b1} that
\begin{equation}\label{u(x_2-y)small}
 u(x_2-y)\leq \frac{1}{2c_1}\delta \;\;\mbox{for any}\;\;y\leq r.
\end{equation}
This, combined with \eqref{Q_1_LB2}, implies that, for any $1\leq k\leq k_0$ and $y\leq r_1$,
\begin{eqnarray*}
  &&Q_{1,k}(u)(x_2-y)\geq ku(x_2-y)-c_1\left(u(x_2-y)\right)^2\\
  &=& ku(x_2-y)(1-\frac{c_1}{k}u(x_2-y))\geq
  ku(x_2-y)(1-\frac{1}{2}\delta).
\end{eqnarray*}
Applying the above bound and the definition \eqref{Q_12} of $Q_{2,k}(u)$ in the first inequality, and \eqref{truncation} in the second, one has
\begin{eqnarray}\label{contradiction}
  &&\frac{\sum_{k=1}^{k_0}kp_k\int_{-\infty}^{r}u(x_2-y)dg_k(y)} {\sum_{k=1}^{k_0}kp_k\int_{-\infty}^{r}u(x_1-y)dg_k(y)}\nonumber\\
  &\leq &\frac{1}{1-\frac{1}{2}\delta} \frac{\sum_{k=1}^{k_0}p_k\int_{-\infty}^{r}Q_{1,k}(u)(x_2-y)dG_k(y)} {\sum_{k=1}^{k_0}p_k\int_{-\infty}^{r}Q_{2,k}(u)(x_1-y)dG_k(y)}\\
  &\leq & \frac{1+\epsilon'}{1-\frac{1}{2}\delta}\leq 1+\epsilon''.\nonumber
\end{eqnarray}
If the conclusion of the lemma does not hold, i.e., for all $1\leq k\leq k_0$,
$$ \int_{-\infty}^{r}u(x_2-y)dg_k(y)> (1+\epsilon'')\int_{-\infty}^{r}u(x_1-y)dg_k(y),$$
one obtains a contradiction to \eqref{contradiction}. This completes the proof of Lemma \ref{conv_flat} in case $q>M/2$.

When $q=M/2$ and $u(x_2-M/2)\leq 4k_0 u(x_2)$, with \eqref{u(x_2)1}, one still has, for $y\leq r\leq q$,
\begin{equation}\label{u(x_2-y)small'}
u(x_2-y)\leq u(x_2-r)\leq 4k_0u(x_2)\leq \frac{8k_0f_0}{(1-\epsilon_0)\delta}\delta.
\end{equation}
Using the bound \eqref{f_0} on $f_0$ and \eqref{b1}, the above is at most
$$\frac{8k_0(\epsilon_1-\epsilon)^{1/\log b-1}}{(1-\epsilon_0)\kappa}\delta\leq \frac{1}{2c_1}\delta.$$
Thus, \eqref{u(x_2-y)small} holds. Repeating the argument below \eqref{u(x_2-y)small}, one gets Lemma \ref{conv_flat} in this case.

When $q=M/2$ but $u(x_2-M/2)>(4k_0)u(x_2)$, we truncate \eqref{truncation} before transforming this case to the previous case. Define
$$r'=\inf\{y\geq 0:u(x_2-y)>4k_0u(x_2)\}.$$
Then $0\leq r'<M/2$ and $u(x_2-r')\leq 4k_0u(x_2)$. By monotonicity of $u$, $u(x_2-y)\geq 4k_0u(x_1-y)$ for $y\in(r',r]$. Therefore, for $1\leq k\leq k_0$,
\begin{eqnarray*}
  &&\int_{r'}^{r}Q_{1,k}(u)(x_2-y)dg_k(y)
  -(1+\epsilon')\int_{r'}^{r}Q_{2,k}(u)(x_1-y)dg_k(y)\\
  &\geq & \int_{r'}^{r}u(x_2-y)dg_k(y)
  -2\int_{r'}^{r}k_0u(x_1-y)dg_k(y)\\
  &=& \int_{r'}^{r}\left(u(x_2-y)-2k_0u(x_1-y)\right)dg_k(y)\geq 0,
\end{eqnarray*}
which, together with \eqref{truncation}, yields the truncated inequality
$$\sum_{k=1}^{k_0}p_k\int_{-\infty}^{r'}Q_{1,k}(u)(x_2-y)dg_k(y)
  \leq(1+\epsilon')
 \sum_{k=1}^{k_0}p_k\int_{-\infty}^{r'}Q_{2,k}(u)(x_1-y)dg_k(y).$$
This is a analog of \eqref{truncation} with $r$ replaced by $r'$, and $u(x_2-r')\leq 4k_0u(x_2)$. Replacing $r$ by $r'$ in the argument starting from \eqref{u(x_2-y)small'}, one concludes the proof of Lemma \ref{conv_flat} in all cases.
\end{proof}

\begin{proof}[Proof of Lemma \ref{truncation_flat}]
\quad This lemma is to justify the flatness of the truncated integral. That is, we want to prove that mass from faraway does not affect the value of the integral in a significant way. This is almost guaranteed by the exponential decay of $g_{n,k}(\cdot)$. However, we need to control the difference between $Q_{1,k}(u)(x_2-y)$ and $Q_{2,k}(u)(x_1-y)$, using the lower bound \eqref{Q_1_LB1} on $Q_{1,k}(u)$ and the definition \eqref{Q_12} of $Q_{2,k}(u)$. Two different cases will be presented separately.

{\it Case (\romannumeral1):} when $r<y_0$, \eqref{r_steep} holds. Because of \eqref{Q_1Q_2} and $\epsilon<\epsilon'$, \eqref{truncation} will follow from
\begin{equation}\label{steepness}
  \int_{r}^{\infty}Q_{1,k}(u)(x_2-y)dg_k(y)-(1+\epsilon')
\int_{r}^{\infty}Q_{2,k}(u)(x_1-y)dg_k(y)\geq 0.
\end{equation}
To prove \eqref{steepness}, because of \eqref{Q_1_LB1}, it suffices to show that
$$\int_{r}^{\infty}u(x_2-y)dg_k(y)
-2k_0\int_{r}^{\infty}u(x_1-y)dg_k(y)\geq 0.\eqno{(\ref{steepness}')}$$
We break the left side into 3 pieces. First, by \eqref{r_steep},
\begin{eqnarray}\label{case2_1}
  &&\frac{1}{2}\int_{r}^{r+M/2}u(x_2-y)dg_k(y)
-2k_0\int_{r}^{r+M/2}u(x_1-y)dg_k(y)\nonumber\\
&=& \int_{r}^{r+M/2}\left(\frac{1}{2}u(x_2-y)
-2k_0u(x_1-y)\right)dg_k(y)\geq 0.
\end{eqnarray}
Second, because of assumption (MT2) (rapid decay of $\bar{g}_k(\cdot)$) and \eqref{M1}, one has
\begin{eqnarray}\label{case2_2}
  &&\frac{1}{2}\int_{r}^{r+M/2}u(x_2-y)dg_k(y)
-2k_0\int_{r+M/2}^{r+M}u(x_1-y)dg_k(y)\nonumber\\
&\geq & \frac{1}{4}u(x_2-r)\bar{g}_k(r)- 2k_0u(x_2-r)\bar{g}_k(r+M/2)\\
&\geq & (\frac{1}{4}-2k_0e^{-aM/2})u(x_2-r)\bar{g}_k(r)\geq 0.\nonumber
\end{eqnarray}
Third, again because of assumption (MT2) (rapid decay of $\bar{g}_k(\cdot)$) and \eqref{M1}, one has
\begin{eqnarray}\label{case2_3}
&&\int_{r+M/2}^{\infty}u(x_2-y)dg_k(y)
-2k_0\int_{r+M}^{\infty}u(x_1-y)dg_k(y)\nonumber\\
&\geq & \int_{r+M/2}^{\infty}u(x_2-(y-M/2))dg_k(y)-2k_0
\int_{r}^{\infty}u(x_2-y)dg_k(y+M)\nonumber\\
&=& \int_{r}^{\infty}u(x_2-y)dg_k(y+M/2)-2k_0
\int_{r}^{\infty}u(x_2-y)dg_k(y+M)\\
&\geq & (1-2k_0e^{-aM/2})\int_{r}^{\infty}u(x_2-y)dg_k(y+M/2)\geq 0\nonumber
\end{eqnarray}
Summing \eqref{case2_1}, \eqref{case2_2} and \eqref{case2_3}, one gets (\ref{steepness}'). Thus, \eqref{steepness} is verified in case (\romannumeral1), and \eqref{truncation} holds.

{\it Case (\romannumeral2):} when $r=y_0$, \eqref{r_steep} may not be true. However, the difference between the two sides of \eqref{truncation} is
\begin{eqnarray*}
  &&\sum_{k=1}^{k_0}p_k\int_{-\infty}^{r}Q_{1,k}(u)(x_2-y)dg_k(y)
  -(1+\epsilon')\sum_{k=0}^{k_0}p_k\int_{-\infty}^{r}Q_{2,k}(u)(x_1-y)dg_k(y)\nonumber\\
 &\leq & \sum_{k=1}^{k_0}p_kg_k*Q_{1,k}(u)(x_2)
  -(1+\epsilon')\sum_{k=1}^{k_0}p_k\int_{-\infty}^{r}Q_{2,k}(u)(x_1-y)dg_k(y).
\end{eqnarray*}
Recall that $\epsilon'=\epsilon+\delta$. \eqref{Q_1Q_2} implies that the above quantity is less than or equal to
\begin{eqnarray*}
  &&(1+\epsilon)\sum_{k=1}^{k_0}p_kg_k*Q_{2,k}(u)(x_1)
  -(1+\epsilon')\sum_{k=1}^{k_0}p_k\int_{-\infty}^{r}Q_{2,k}(u)(x_1-y)dg_k(y)\\
  &=&(1+\epsilon')\sum_{k=1}^{k_0}p_k\int_{r}^{\infty}Q_{2,k}(u)(x_1-y)dg_k(y)
  -\delta\sum_{k=1}^{k_0}p_kg_k*Q_{2,k}(u)(x_1).
\end{eqnarray*}
Since $r=y_0=\frac{1}{a}\log\frac{2k_0}{\delta f_0}$, the assumption (MT2) implies $\bar{g}_k(r)\leq e^{-ay_0}=\frac{\delta f_0}{2k_0}$. $Q_{2,k}(u)\leq k_0$, \eqref{bound'} and \eqref{f_0} yield that the above quantity again does not exceed
\begin{eqnarray*}
  (1+\epsilon')k_0 \frac{\delta f_0}{2k_0}-\delta f_0\leq 0.
\end{eqnarray*}
So \eqref{truncation} is proved in case (\romannumeral2). This completes the proof of the lemma.
\end{proof}

\section{Tightness for identical marginals}\label{assumption n result 2}

In this section, we discuss the tightness problem in the case when all the marginal distributions at the same level are the same, i.e., $g_{n,k}(\cdot)=g_n(\cdot)$ does not depend on the number of offsprings. Compared with the assumptions made in Section \ref{assumption n result}, we relax the bounded support assumption (B1) on $p_{n,k}$s, at the price of a uniform marginal assumption on $G_{n,k}$ (see (MT0') below). Namely, we assume
\begin{itemize}
  \item[(B1')] There exist positive real numbers $m_0$ and $m_1$ such that $\inf_n\{\sum_{k=1}^{\infty}kp_{n,k}\}>m_0>1$ and $\sup_n\sum_{k=1}^{\infty}k^2p_{n,k}<m_1$.
  \item[(MT0')] $g_{n,k}(\cdot)=g_n(\cdot)$ for all $k\geq 1$.
  \item[(MT1')] For some fixed $\epsilon_0< \frac{1}{4}\log m_0\wedge 1$, there exists an $x_0$ such that $\bar{g}_n(x_0)\geq 1-\epsilon_0$ for all $n$, where $\bar{g}_n(x)=1-g_n(x)$. By shifting, we will assume that $x_0=0$, that is, $\bar{g}_n(0)\geq 1-\epsilon_0$.
  \item[(MT2')] There exist $a>0$ and $M_0>0$ such that $\bar{g}_n(x+M)\leq e^{-aM}\bar{g}_n(x)$ for all $n$ and $M>M_0$, $x\geq 0$.
  \item[(GT')] For any $\eta_1>0$, there exists a $B>0$ such that $G_{n,k}(B,\dots,B)\geq 1-\eta_1$ and $\bar{g}_n(-B)\geq 1-\eta_1$ for all $n$ and $k$.
\end{itemize}
Then we still have the following tightness result.
\begin{theorem}\label{tightness 2}
  Under assumptions (B1'), (MT0'), (MT1'), (MT2') and (GT'), the family of the recentered maxima distribution $\{F_n\left(\cdot-Med(F_n)\right)\}$ is tight.
\end{theorem}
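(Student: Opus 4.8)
The plan is to mirror the entire development of Sections 3--4 that was carried out for Theorem \ref{tightness}, replacing the bounded-branching hypothesis (B1) by (B1') and using (MT0') to make the marginal-independence machinery go through. Since all the displacement marginals now coincide, $g_{n,k}=g_n$, the recursion bound \eqref{bound} simplifies: both the $Q_{1,k}$ and $Q_{2,k}$ sides convolve against the \emph{same} kernel $g_n$, so one can factor the branching out and work with the single nonlinearity $\widehat Q_m(u)=\sum_{k\ge1}p_{m,k}Q_{1,k}(u)$ (lower) and $\widetilde Q_m(u)=\sum_{k\ge1}kp_{m,k}\,u$ (upper), i.e.\ $\widehat Q_m*g_m(\cdot)\le \bar F_n^m\le \widetilde Q_m*g_m(\cdot)$. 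The crucial point is that under (B1') the two elementary facts of Lemma \ref{Q_1_LB12} survive in summed-up form: $\widehat Q_m(u)\ge u$ (since each $Q_{1,k}(u)\ge u$ and $\sum p_{m,k}=1$), and the quadratic lower bound $\widehat Q_m(u)\ge \big(\sum_k kp_{m,k}\big)u - c_1 u^2$ holds with $c_1$ now depending on $m_1=\sup_n\sum_k k^2 p_{n,k}$ rather than on $k_0$, because $|\,Q_{1,k}(u)-ku\,|\le \binom{k}{2}u^2\le \tfrac12 k^2 u^2$ and summing against $p_{m,k}$ gives $c_1=\tfrac12 m_1$. This is exactly where the second moment assumption in (B1') is needed, and it is the only structural place the change of hypotheses bites.

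With that in hand I would rerun the argument of Section \ref{sec_proof_proposition} essentially verbatim. Define the same Lyapunov function $L$ via \eqref{Lfunction}--\eqref{lfunction}, and choose $\epsilon_1,b,M,\kappa$ subject to the analogues of \eqref{M1}--\eqref{b4}, in which every occurrence of $k_0$ is replaced by the appropriate moment quantity: $c_1=\tfrac12 m_1$ in \eqref{b1} and \eqref{b2}, and the $(4k_0)^{\,\cdot}$ factors coming from the ``steepness'' thresholds are replaced by $(4K)^{\,\cdot}$ for a fixed large integer $K=K(m_1)$ chosen so that $\sum_{k>K}kp_{n,k}$ and the associated tail sums are negligible uniformly in $n$ (this is legitimate because $\sup_n\sum_k k^2 p_{n,k}<m_1$ forces $\sup_n\sum_{k>K}kp_{n,k}\to0$ as $K\to\infty$ by Markov's inequality). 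The compatibility of the restrictions goes through as before: choose $\epsilon_1$ and $\beta$ (with $\kappa=\beta\log b$) small using $\epsilon_0<\tfrac14\log m_0$, then $b$ close to $1$ using $m_0>1$, then $M$ large. The estimates \eqref{u(x_i)}--\eqref{u(x_2)2} then hold with $g_k$ replaced by $g_m$ and $\sum_k kp_k$ in place of $m_0$ in the relevant spots, using (MT1') for the factor $1/(1-\epsilon_0)$. Lemmas \ref{truncation_flat}, \ref{conv_flat}, \ref{flatness} carry over with the same proofs after these cosmetic substitutions --- in Lemma \ref{conv_flat} the $(4k_0)^{2r/M+2}$ growth factor becomes $(4K)^{2r/M+2}$, and the restriction \eqref{b1}-analogue is arranged to absorb it. The proof of Proposition \ref{prop_fact}, and hence of the analogue of Proposition \ref{Lyapunov} and Corollary \ref{RT}, follows.

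For the left tail I would use (GT') in place of (GT). Lemma \ref{Assumption in BZ} needs the bound $G_{n,k}([-B,\infty)^k)\ge 1-\eta_1$, but (GT') only gives $\bar g_n(-B)\ge1-\eta_1$ for each coordinate; however, by a union bound $G_{n,k}([-B,\infty)^k)\ge 1-k\,g_n(-B)\ge 1-k\eta_1$, which is not uniform in $k$. To fix this I would instead restrict the recursion integral to the event that the \emph{maximum} coordinate is $\ge -B$, i.e.\ use $1-\prod_i(1-\bar F_n^{m+1}(x-y_i))\ge \bar F_n^{m+1}(x+B)$ on $\{\max_i y_i\ge -B\}=([-B,\infty)\times\mathbb R^{k-1})\cup\cdots$, whose probability is $1-\prod_i g_n(-B)=1-g_n(-B)^k\ge 1-g_n(-B)$, uniformly in $k$ since $g_n(-B)\le 1$ --- wait, that is only $\ge 1-g_n(-B)\ge 1-\eta_1$ when I bound $g_n(-B)^k\ge$ nothing useful; rather I use $1-g_n(-B)^k\ge 1-g_n(-B)\ge 1-\eta_1$, which \emph{does} hold. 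Thus the lower bound $\bar F_n^m(x)\ge (1-\eta_1)\bar F_n^{m+1}(x+B)\ge Q_m(\bar F_n^{m+1})(x+B)-\eta_1$ is recovered uniformly, and similarly for the upper bound via $G_{n,k}(B,\dots,B)\ge 1-\eta_1$ as in the original proof. With Lemma \ref{Assumption in BZ} re-established, properties (T1') and (T2') for $Q_m(u)=\sum_k p_{m,k}(1-(1-u)^k)$ hold uniformly in $m$ using only $m_0>1$ and convexity (no boundedness of $k$ needed for T1'; for T2' one uses $Q_m'(u)=\sum_k kp_{m,k}(1-u)^{k-1}$, which is controlled via $\sum_k k p_{m,k}\le m_1^{1/2}\cdot\!\sqrt{\,}$-type bounds or directly via the second moment). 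Then Lemma \ref{LnR} and Proposition \ref{tightprop} apply with (B1) replaced by (B1'), yielding tightness. The main obstacle is purely bookkeeping: verifying that the quadratic remainder constant $c_1$ and all the ``$k_0$''-dependent numerology in \eqref{M1}--\eqref{b4}, Lemma \ref{conv_flat}, and (T1')--(T2') can be made uniform using only the first and second branching moments $m_0,m_1$ together with the uniform tail-truncation in $K$; I expect no genuinely new difficulty, only care in the constants.
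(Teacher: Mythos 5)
Your overall strategy (rerun the Lyapunov/right-tail analysis with the aggregated nonlinearities and then reuse the Bramson--Zeitouni left-tail machinery via the analog of Lemma \ref{Assumption in BZ}) is the same route the paper sketches, but your treatment of the left tail under (GT') has a genuine gap. First, the intermediate identity ``probability $=1-\prod_i g_n(-B)=1-g_n(-B)^k$'' assumes the sibling displacements are independent, which is exactly what this model does not assume (only the bound $G_{n,k}(\{\max_i y_i\geq -B\})\geq \bar g_n(-B)\geq 1-\eta_1$ is available, which luckily suffices for that step). Much more seriously, by bounding $1-\prod_i\left(1-\bar F_n^{m+1}(x-y_i)\right)$ below by the single term $\bar F_n^{m+1}(x+B)$ you throw away the branching gain, and your concluding inequality $(1-\eta_1)\bar F_n^{m+1}(x+B)\geq Q_m(\bar F_n^{m+1})(x+B)-\eta_1$ is simply false in general: for $u:=\bar F_n^{m+1}(x+B)$ small one has $Q_m(u)\approx\left(\sum_k kp_{m,k}\right)u\geq m_0u$ with $m_0>1$, so the inequality fails as soon as $u$ exceeds roughly $\eta_1/(m_0-1)$ (e.g.\ $p_{m,2}=1$, $u=0.3$, $\eta_1=0.01$ gives $0.297\geq 0.5$). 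The lower bound in \eqref{pwbounds} must retain $Q_m$, because the whole of Lemma \ref{LnR} and Proposition \ref{tightprop} rests on the uniform expansion property (T1'), $Q_m(x)>c_\delta x$ with $c_\delta>1$, to produce the contraction $\bar F_n^{m+1}(\cdot)\leq\gamma\eta$; with only the factor $(1-\eta_1)<1$ in place of $Q_m$ the iteration does not contract and the left-tail control collapses. The gap is repairable while keeping your idea: let $N_k$ be the number of coordinates of $(y_1,\dots,y_k)$ that are $\geq -B$; on this event $1-\prod_i\left(1-\bar F_n^{m+1}(x-y_i)\right)\geq 1-\left(1-\bar F_n^{m+1}(x+B)\right)^{N_k}$, the map $N\mapsto 1-(1-u)^N$ is concave with value $0$ at $N=0$, hence $\geq \frac{N_k}{k}\left(1-(1-u)^k\right)$, and $E[N_k]\geq k\bar g_m(-B)\geq k(1-\eta_1)$ by the marginal bound alone (no independence needed); integrating yields $\bar F_n^m(x)\geq(1-\eta_1)Q_m(\bar F_n^{m+1})(x+B)\geq Q_m(\bar F_n^{m+1})(x+B)-\eta_1$, which is the bound actually required.

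On the right-tail side your route also deviates from the paper in a harmless but heavier way: the paper exploits (MT0') to work directly with the aggregated functions $Q_{m,(1)},Q_{m,(2)}$ and replaces the role of $k_0$ by $\sqrt{m_1}$ through Lemma \ref{Q_1'_LB12} (in particular $Q_{m,(2)}(u)\leq\sqrt{m_1}\,u$ and $Q_{m,(1)}(u)\geq Q_{m,(2)}(u)-c_2u^2$, the latter being your $c_1=\tfrac12 m_1$ computation), so no truncation of the offspring distribution at a level $K$ is needed; your $K$-truncation introduces additional additive error terms in Lemmas \ref{truncation_flat} and \ref{conv_flat} that would have to be absorbed into the sign inequalities there. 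This is extra bookkeeping rather than an error, but you should either carry it out explicitly or switch to the paper's $\sqrt{m_1}$ bounds, and in either case also verify (T1')--(T2') for $Q_m$ uniformly in $m$ using only $m_0,m_1$ (e.g.\ via $P(K\geq 2)\geq(m_0-1)^2/m_1$), as you indicate but do not prove.
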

Since the proof is similar to the proof of Theorem \ref{tightness}, we only bring a sketch. The argument is based on the following recursion inequality, another form of \eqref{bound} under the assumption (MT0'),
\begin{equation}\label{g_bounds}
  g_{m}*\left(\sum_{k=1}^{\infty}p_{m,k}Q_{1,k}(\bar{F}_n^{m+1})\right)(x)\leq \bar{F}_n^m(x)\leq g_m*\left(\sum_{k=1}^{\infty}p_{m,k}Q_{2,k}(\bar{F}_n^{m+1})\right)(x),
\end{equation}
where $Q_{1,k}$ and $Q_{2,k}$ are defined as \eqref{Q_12}. Set
\begin{equation}\label{Q_1'}
  Q_{m,(1)}(u)=\sum_{k=1}^{\infty}p_{m,k}Q_{1,k}(u),
\end{equation}
and
\begin{equation}\label{Q_2'}
  Q_{m,(2)}(u)=\sum_{k=1}^{\infty}p_{m,k}Q_{2,k}(u).
\end{equation}
Although the difference between $Q_{1,k}$ and $Q_{2,k}$ gets bigger as $k$ becomes bigger, the weighted function $Q_{m,(1)}$ and $Q_{m,(2)}$ still behave nicely and possess an analog of Lemma \ref{Q_1_LB12}.
\begin{lemma}\label{Q_1'_LB12}
Let $Q_{m,(1)}$ and $Q_{m,(2)}$ be defined as in \eqref{Q_1'} and \eqref{Q_2'} respectively, then it follows from assumption (B') that
\begin{equation}\label{Q_1'_LB1}
  Q_{m,(1)}(u)>u,
\end{equation}
and
\begin{equation}\label{Q_1'_LB2}
  Q_{m,(2)}(u) - c_2 u^2 \leq Q_{m,(1)}(u)\leq Q_{m,(2)}(u)\leq \sqrt{m_1} u.
\end{equation}
\end{lemma}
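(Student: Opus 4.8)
The plan is to verify the two displayed inequalities in Lemma \ref{Q_1'_LB12} by expanding the definitions \eqref{Q_1'} and \eqref{Q_2'} and then comparing term by term against the corresponding facts about $Q_{1,k}$ and $Q_{2,k}$ supplied by Lemma \ref{Q_1_LB12}, using only the moment bounds in (B1'). Recall that $Q_{m,(1)}(u)=\sum_{k=1}^{\infty}p_{m,k}\left(1-(1-u)^k\right)$ and $Q_{m,(2)}(u)=\sum_{k=1}^{\infty}p_{m,k}\,ku$. The two bounds to prove are \eqref{Q_1'_LB1} and \eqref{Q_1'_LB2}; I would treat them in that order.

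For \eqref{Q_1'_LB1}, the pointwise inequality $Q_{1,k}(u)=1-(1-u)^k\geq u$ (which is \eqref{Q_1_LB1}, with strict inequality as soon as $k\geq 2$ and $u\in(0,1)$) gives $Q_{m,(1)}(u)=\sum_k p_{m,k}Q_{1,k}(u)\geq \sum_k p_{m,k}u=u$. To get the strict inequality one uses that $\sum_k kp_{m,k}>m_0>1$ forces $p_{m,k}>0$ for some $k\geq 2$, so the $k=1$ term alone cannot exhaust the mass; hence the sum is strictly larger than $u$ for $u\in(0,1)$ (and the boundary cases $u=0,1$ are trivial or excluded).

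For \eqref{Q_1'_LB2}, the middle and right-most inequalities $Q_{m,(1)}(u)\leq Q_{m,(2)}(u)$ and $Q_{m,(2)}(u)\leq\sqrt{m_1}\,u$ come, respectively, from summing the pointwise bound $Q_{1,k}(u)\leq ku=Q_{2,k}(u)$ in \eqref{Q_1_LB2} against the weights $p_{m,k}$, and from $Q_{m,(2)}(u)=\left(\sum_k kp_{m,k}\right)u\leq\left(\sum_k k^2 p_{m,k}\right)u\leq m_1 u$; in fact one can sharpen to $\sqrt{m_1}\,u$ via Cauchy--Schwarz, $\sum_k kp_{m,k}=\sum_k k\sqrt{p_{m,k}}\sqrt{p_{m,k}}\leq\left(\sum_k k^2p_{m,k}\right)^{1/2}\left(\sum_k p_{m,k}\right)^{1/2}\leq\sqrt{m_1}$, which is exactly why the $\sqrt{m_1}$ (rather than $m_1$) appears in the statement. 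For the left-most inequality, the natural route is \emph{not} to apply \eqref{Q_1_LB2} termwise (the constant $c_1$ there depends on $k_0$, which no longer exists), but instead to use the elementary bound $1-(1-u)^k\geq ku-\binom{k}{2}u^2\geq ku-\frac{k^2}{2}u^2$, valid for $0\leq u\leq 1$ by the alternating-and-decreasing structure of the binomial expansion (or by two applications of Bernoulli/Taylor with remainder). Summing this against $p_{m,k}$ gives $Q_{m,(1)}(u)\geq Q_{m,(2)}(u)-\frac{1}{2}\left(\sum_k k^2p_{m,k}\right)u^2\geq Q_{m,(2)}(u)-\frac{m_1}{2}u^2$, so one takes $c_2=m_1/2$.

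The only real point requiring care — and the step I expect to be the mild obstacle — is justifying $1-(1-u)^k\geq ku-\tfrac{k^2}{2}u^2$ uniformly in $k$ with the quadratic coefficient controlled by the \emph{second} moment (so that $\sup_n\sum_k k^2p_{m,k}<m_1$ can absorb it); one must check that the higher-order terms $\binom{k}{j}u^j$ for $j\geq 3$ do not spoil the inequality, which follows because for $0\le u\le1$ the tail $\sum_{j\ge2}(-1)^j\binom{k}{j}u^j=(1-u)^k-1+ku$ is nonnegative, hence dropping everything past the quadratic term can only decrease the right side. (I note in passing that the hypothesis is cited as ``(B')'' in the lemma statement but clearly refers to (B1'); I would fix that typo.) Everything else is a one-line consequence of Lemma \ref{Q_1_LB12} and the moment assumptions, so the proof is short.
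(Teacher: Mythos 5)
The paper states Lemma \ref{Q_1'_LB12} without proof (it is given as a remark-level analog of Lemma \ref{Q_1_LB12}), so there is no author's argument to compare against; judged on its own, your proof is correct and is surely the intended one. Cauchy--Schwarz turns the second-moment bound in (B1') into $\sum_k kp_{m,k}\le\sqrt{m_1}$, hence $Q_{m,(2)}(u)\le\sqrt{m_1}\,u$; the termwise bound $Q_{1,k}\le Q_{2,k}$ gives the middle inequality; strictness of $Q_{m,(1)}(u)>u$ for $u\in(0,1)$ follows, as you say, because $\sum_k kp_{m,k}>m_0>1$ forces $p_{m,k}>0$ for some $k\ge2$; and the uniform quadratic bound $1-(1-u)^k\ge ku-\binom{k}{2}u^2$ summed against $p_{m,k}$ yields $c_2=m_1/2$, which is exactly how the second moment $m_1$ replaces the constant $c_1(k_0)$ that is no longer available.

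One caveat of exposition: your justification of $1-(1-u)^k\ge ku-\binom{k}{2}u^2$ via nonnegativity of the tail $\sum_{j\ge2}(-1)^j\binom{k}{j}u^j=(1-u)^k-1+ku$ is circular as written --- that nonnegativity is just Bernoulli's inequality $(1-u)^k\ge 1-ku$ and says nothing about the $j\ge3$ terms, which is precisely what you need to control. The parenthetical route you mention is the one to write down: for $k\ge2$ (the case $k=1$ is trivial), Taylor with Lagrange remainder gives $(1-u)^k=1-ku+\binom{k}{2}(1-\xi)^{k-2}u^2\le 1-ku+\binom{k}{2}u^2$ for some $\xi\in(0,u)$; equivalently, use the second Bonferroni inequality, or note that $f(u)=1-ku+\binom{k}{2}u^2-(1-u)^k$ satisfies $f(0)=f'(0)=0$ and $f''\ge0$ on $[0,1]$. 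With that substitution the proof is complete, and your observation that ``(B')'' in the statement should read (B1') is also correct.
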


Lemma \ref{truncation_flat} relies on the facts that $Q_{1,k}(u)\geq u$ and $Q_{2,k}(u)\leq k_0u$, and Lemma \ref{conv_flat} relies on the fact that $Q_{1,k}(u)\geq Q_{2,k}(u)-c_1u^2$. Therefore, with a modification of $q$ and $r$, we can prove analogs of those two lemmas due to the bounds in Lemma \ref{Q_1'_LB12}. An analog of Proposition \ref{prop_fact} then follows. Proposition \ref{Lyapunov} and Corollary \ref{RT} hold under the new assumptions in this section.


Assumption (GT') plays a role as (GT) in connecting the left and right tail behavior. Specifically, it guarantees Lemma \ref{Assumption in BZ}, Lemma \ref{LnR} and Proposition \ref{tightprop} under the new settings. Theorem \ref{tightness 2} follows immediately as Theorem \ref{tightness}.


\section*{Appendix}

\begin{proof}[Proof of Lemma 7 assuming Lemma 6]
This will be proved by contradiction. Assume that neither (a) nor (b) in lemma 7 holds, i.e.,
$$u(x_2-y)>(1+\epsilon^{(3)})u(x_1-y)\;\;\mbox{for all}\;\;y\leq r'\wedge M,\eqno{(\bar{a})}$$
and
$$u(x_2-y)>(1+\epsilon''-\delta e^{ay/8})u(x_1-y)\;\;\text{for all}\;\;y\in(M,r'].\eqno{(\bar{b})}$$
If $r'\leq M$, then only ($\bar{a}$) holds and it implies that
$$\int_{-\infty}^{r'}u(x_2-y)dg_k(y)>(1+\epsilon^{(3)})
\int_{-\infty}^{r'}u(x_1-y)dg_k(y).$$
Since $\epsilon^{(3)}=\epsilon''+\delta>\epsilon''$, this is a contradiction to \eqref{truncation}. So we are done.
If $r'>M$, then $r'=r$ and ($\bar{a}$) and ($\bar{b}$) imply that
$$\int_{-\infty}^{M}u(x_2-y)dg_k(y)>(1+\epsilon^{(3)})
\int_{-\infty}^{M}u(x_1-y)dg_k(y),$$
and
$$\int_{M}^{r}u(x_2-y)dg_k(y)>
\int_{M}^{r}(1+\epsilon''-\delta e^{ay/8})u(x_1-y)dg_k(y).$$
Summing the above two inequality, one gets that
\begin{eqnarray*}
  &&\int_{-\infty}^{r}u(x_2-y)dg_k(y)>(1+\epsilon'')
\int_{-\infty}^{r}u(x_1-y)dg_k(y)\\
&&\;\;\;\;\;\;\;\;\;\;\;+\delta \left[ \int_{-\infty}^{M}u(x_1-y)dg_k(y)- \int_{M}^{r} e^{ay/8}u(x_1-y)dg_k(y)\right].
\end{eqnarray*}
We claim that
\begin{equation}\label{claim}
  \int_{-\infty}^{M}u(x_1-y)dg_k(y)- \int_{M}^{r} e^{ay/8}u(x_1-y)dg_k(y)\geq 0,
\end{equation}
which will imply a contradiction of \eqref{truncation} and complete the proof. It thus remains to prove the claim \eqref{claim}. Since $q\geq r>M/2$, one has $u(x_2-y)\leq (4k_0)^2u(x_1-y)$ for all $y\in[M/2,r]$. By \eqref{M1}, one can bound the second integral in the left side of the above inequality as follows.
\begin{eqnarray*}
  &&\int_{M}^{r} e^{ay/8}u(x_1-y)dg_k(y) =  \sum_{l=1}^{\infty}\int_{lM}^{lM+M} e^{ay/8}u(x_1-y)1_{\{y\leq r\}}dg_k(y)\\
  &\leq & \sum_{l=1}^{\infty}\int_{lM}^{lM+M} e^{alM/8+aM/8}(4k_0)^{2l+2}u(x_1-M/2)dg_k(y)\\
  &\leq & \sum_{l=1}^{\infty} e^{alM/8+aM/8}(4k_0)^{2l+2}u(x_1-M/2)
  \bar{g}_k(lM)\\
  &\leq & \sum_{l=1}^{\infty} e^{alM/8+aM/8}(4k_0)^{2l+2}u(x_1-M/2)
  e^{-alM+aM/2}\bar{g}_k(M/2)\leq \frac{1}{4}u(x_1-M/2)\bar{g}_k(M/2).\\
\end{eqnarray*}
But the last term does not exceed
$$\int_{M/2}^Mu(x_1-y)dg_k(y)\leq \int_{-\infty}^M u(x_1-y)dg_k(y).$$
So the proof of \eqref{claim} is complete. We are done with proving Lemma \eqref{flatness}.
\end{proof}

\section*{Acknowledgement}

I would like to thank my advisor, Ofer Zeitouni, for posing this problem and guiding me through this work.

\end{document}